\documentclass{amsart}
\usepackage{amssymb,latexsym,amsmath,amscd,graphicx,graphics,epic,eepic,bm,color,array,mathrsfs,fullpage}
\usepackage{enumerate}
\usepackage[all,knot,poly]{xy}

\newcommand{\nat}{\mathbb{N}}

\newcommand{\E}{\mathbb{E}}
\newcommand{\R}{\mathbb{R}}

\newcommand{\fil}{\mathcal{F}}

\newcommand{\bn}[2]{\genfrac{(}{)}{0pt}{}{#1}{#2}}

\newcommand{\ve}{\varepsilon}
\newcommand{\vf}{\varphi}

\newcommand{\id}{\mathrm{id}}

\newcommand{\Hess}{\mathrm{Hess}}
\newcommand{\M}{\mathcal{M}}
\newcommand{\g}{\gamma}
\newcommand{\adj}{\mathrm{adj}}
\newcommand{\p}{\mathcal{P}}
\newcommand{\B}{\mathcal{B}}

\theoremstyle{plain}
\newtheorem{theorem}{Theorem}[section]
\newtheorem{lemma}[theorem]{Lemma}
\newtheorem{proposition}[theorem]{Proposition}
\newtheorem{corollary}[theorem]{Corollary}

\theoremstyle{definition}
\newtheorem{definition}[theorem]{Definition}
\newtheorem{assumption}[theorem]{Assumption}

\theoremstyle{remark}

\numberwithin{theorem}{section}
\numberwithin{equation}{section}

\begin{document}

\title{Convergence of Riemannian Stochastic Gradient Descents: Varying Batch Sizes And Nonstandard Batch Forming}

\author{Hao Wu}

\thanks{The author would like to thank Conglong Xu and Peiqi Yang for interesting conversations.}

\address{Department of Mathematics, The George Washington University, Phillips Hall, Room 739, 801 22nd Street, N.W., Washington DC 20052, USA. Telephone: 1-202-994-0653, Fax: 1-202-994-6760}

\email{haowu@gwu.edu}

\subjclass[2010]{Primary 41A60, 53Z50, 62L20, 68T05}

\keywords{stochastic gradient descent, adaptive batch size, manifold} 

\begin{abstract}
We establish convergence theorems for Riemannian stochastic gradient descents in which the underlying probability spaces vary from iteration to iteration. As applications, we deduce  convergence results for Riemannian stochastic gradient descents with varying batch sizes and unbiased batch forming schemes.
\end{abstract}

\maketitle

\section{Background}\label{sec-background}

\subsection{Bonnabel's theorem} In \cite{Bonnabel}, Bonnabel established an almost sure convergence theorem for Riemannian stochastic gradient descents. Before stating this theorem, let us first recall the definition of retractions on manifolds.

\begin{definition}\cite[Definition 4.1.1]{AMS}\label{def-retraction}
Let $\M$ be a differentiable manifold. A retraction on $\M$ is a $C^2$ map $R:T\M \rightarrow \M$ such that, for every $x \in \M$, the restriction $R_x=R|_{T_x \M}$ satisfies
\begin{itemize}
	\item $R_x(\mathbf{0}_x)=x$, where $\mathbf{0}_x$ is the zero vector in $T_x \M$,
	\item $dR_x(\mathbf{0}_x)=\id_{T_x \M}$ under the standard identification $T_{\mathbf{0}_x}T_x\ M \cong T_x \M$, where $dR_x$ is the differential of $R_x$ and $\id_{T_x \M}$ is the identity map of $T_x \M$.
\end{itemize}
\end{definition}

Now we can state Bonnabel's theorem.

\begin{theorem}\label{THM-Bonnabel}\cite[Theorem 2]{Bonnabel}
Assume that
\begin{enumerate}
  \item $\M$ is a connected Riemannian manifold,\footnote{\cite[Theorem 2]{Bonnabel} assumes furthermore that the injective radius of $\M$ is uniformly bounded below by a positive number. This is unnecessary since \cite[Theorem 2]{Bonnabel} also assumes that all iterates are contained in a compact subset of $\M$.}
	\item $R:T\M \rightarrow \M$ a retraction
	\item $F:\M\rightarrow \R$ is a $C^3$ cost function,
	\item $(\Omega,\fil, \mu)$ is a probability space, where $\Omega$ is the sample space, $\fil$ is the event space and $\mu$ is the probability measure,
	\item $H:\M \times \Omega \rightarrow T\M$ is a function satisfying that
	\begin{enumerate}
		\item for every $\omega \in \Omega$, $H(\bullet, \omega): \M\rightarrow T\M$ is a tangent vector field on $\M$,
		\item for every $x \in \M$, $\E(H(x, \omega)) :=\int_\Omega H(x, \omega) d\mu = \nabla F(x)$, 
	\end{enumerate}
	\item $\{\omega_t\}_{t=0}^\infty$ is a sequence of independent random variables taking values in $\Omega$ with identical probability distribution $\mu$,
	\item $\{\g_t\}_{t=0}^\infty$ is a sequence of positive learning rates satisfying 
	\begin{equation}\label{standard-condition-learning-rates}
	\sum_{t=0}^\infty \g_t^2 < \infty \text{ and } \sum_{t=0}^\infty \g_t = \infty,
	\end{equation}
	\item $x_0 \in \M$ and $\{x_t\}_{t=0}^\infty \subset \M$ is the sequence of iterates in $\M$ generated by
	\begin{equation}\label{eq-SGD-update}
	x_{t+1} = R_{x_t}(-\g_t H(x_t,\omega_t)) \text{ for } t \geq 0, 
	\end{equation}
	\item there is a compact set $K\subset \M$ such that
	\begin{enumerate}
		\item $\{x_t\}_{t=0}^\infty \subset K$,
		\item there is an $A>0$ such that $\| H(x,\omega)\|_x \leq A$ for all $x \in K$ and $\omega \in \Omega$, where $\| \bullet\|_x$ is the Riemannian norm of $\M$ on $T_x \M$.
	\end{enumerate}
\end{enumerate}
Then $F(x_t)$ converges almost surely and $\|\nabla F(x_t)\|_{x_t} \rightarrow 0$ almost surely. 
\end{theorem}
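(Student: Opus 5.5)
The plan is the classical Robbins--Siegmund / quasi-martingale argument, carried out on the compact set $K$ using a second-order Taylor bound for $F\circ R_x$.

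\textbf{Step 1: one-step bound.} The map $(x,v)\mapsto F(R_x(v))$ is $C^2$ on $T\M$. Consider the compact set $\{(x,v): x\in K,\ \|v\|_x\le A\gamma_{\max}\}$, where $\gamma_{\max}=\sup_t \g_t<\infty$ (this holds since $\sum\g_t^2<\infty$). By Taylor's theorem on each fiber, together with $R_x(\mathbf{0}_x)=x$ and $dR_x(\mathbf{0}_x)=\id$, there is a constant $k>0$ such that
\[
F(R_x(v)) \le F(x) + \langle \nabla F(x), v\rangle_x + k\|v\|_x^2
\]
on this set. Apply this with $x=x_t$ and $v=-\g_tH(x_t,\omega_t)$. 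This gives
\[
F(x_{t+1})\le F(x_t)-\g_t\langle\nabla F(x_t),H(x_t,\omega_t)\rangle_{x_t}+k A^2\g_t^2 .
\]

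\textbf{Step 2: conditional expectation.} Let $\fil_t$ be the $\sigma$-algebra generated by $\omega_0,\dots,\omega_{t-1}$. Then $x_t$ is $\fil_t$-measurable and $\omega_t$ is independent of $\fil_t$, so hypothesis (5b) gives
\[
\E\bigl(F(x_{t+1})\mid\fil_t\bigr)\le F(x_t)-\g_t\|\nabla F(x_t)\|^2_{x_t}+kA^2\g_t^2 .
\]
Since $F$ is bounded below on $K$, the Robbins--Siegmund theorem (equivalently, Fisk's quasi-martingale convergence theorem) applies. It yields that $F(x_t)$ converges almost surely and that $\sum_t\g_t\|\nabla F(x_t)\|^2<\infty$ almost surely.

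\textbf{Step 3: the gradient tends to zero.} This is the main obstacle, because Step 2 only gives $\liminf_t\|\nabla F(x_t)\|=0$ when combined with $\sum_t\g_t=\infty$. To upgrade to a full limit, set $g(x)=\|\nabla F(x)\|_x^2$, which is $C^2$ because $F$ is $C^3$. Repeat the Taylor argument of Step 1 for $g$:
\[
g(x_{t+1})-g(x_t)\le -\g_t\langle\nabla g(x_t),H(x_t,\omega_t)\rangle_{x_t}+k'A^2\g_t^2 .
\]
The positive part of the increment therefore satisfies
\[
\E\bigl((g(x_{t+1})-g(x_t))^+\mid\fil_t\bigr)\le \g_t\,C\,g(x_t)^{1/2}+k'A^2\g_t^2 ,
\]
where $C$ bounds $\|\nabla g\|$ on $K$ and we used $\|\nabla g\|\lesssim\|\nabla F\|$ times a Hessian bound. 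Next use $\g_t g^{1/2}\le \g_t(g+1)$. A cleaner route is to use $\langle\nabla g(x_t),\nabla F(x_t)\rangle\le C' g(x_t)$, after splitting $H=\nabla F+(H-\nabla F)$, where the second piece is a martingale difference. This gives
\[
\sum_t \E\bigl((g(x_{t+1})-g(x_t))^+\mid\fil_t\bigr)\le C'\sum_t\g_t g(x_t)+k'A^2\sum_t\g_t^2+(\text{martingale term}) .
\]
The first two sums are finite by Step 2 and \eqref{standard-condition-learning-rates}. The martingale term $\sum_t\g_t\langle\nabla g(x_t),H-\nabla F\rangle$ has bounded increments of size $O(\g_t)$, so it converges almost surely by $L^2$ martingale convergence.

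\textbf{Step 4: conclusion.} The quasi-martingale theorem now gives that $g(x_t)$ converges almost surely. Since $\sum_t\g_t g(x_t)<\infty$ and $\sum_t\g_t=\infty$, the limit must be $0$. Hence $\|\nabla F(x_t)\|_{x_t}\to0$ almost surely.
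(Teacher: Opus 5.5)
Your overall strategy is sound, but Step 3 as written does not work and needs a repair. With $\Delta_t=g(x_{t+1})-g(x_t)$, you put the positive part inside the conditional expectation. By Jensen, $\E\bigl((\Delta_t)^+\mid\fil_t\bigr)\ge\bigl(\E(\Delta_t\mid\fil_t)\bigr)^+$. Inside $(\cdot)^+$ the noise $-\g_t\langle\nabla g(x_t),H(x_t,\omega_t)-\nabla F(x_t)\rangle_{x_t}$ does not cancel, so $\E((\Delta_t)^+\mid\fil_t)$ is in general of order $\g_t\, g(x_t)^{1/2}$. That is exactly the non-summable bound you were trying to escape. Your displayed inequality, whose right-hand side is a.s.\ finite, is therefore false in general; a ``martingale term'' cannot appear there.

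The fix uses only tools you already have; either of the following works.
\begin{enumerate}
\item[(a)] Condition without the positive part. Using $-\langle\nabla g,\nabla F\rangle=-2\Hess F(\nabla F,\nabla F)\le C'g$, you get $\E(g(x_{t+1})\mid\fil_t)\le g(x_t)+C'\g_t g(x_t)+k'A^2\g_t^2$. Then apply Robbins--Siegmund again with $C'\g_t g(x_t)+k'A^2\g_t^2$ as the \emph{additive} adapted perturbation, which is a.s.\ summable by Step 2. It cannot be treated as a multiplicative factor $1+a_t$, since $\sum\g_t=\infty$. Alternatively, take full expectations in Step 2 to get $\sum_t\g_t\E g(x_t)<\infty$ and use Fisk's theorem.
\item[(b)] Argue pathwise. $\Delta_t$ is bounded by a summable term plus the increment of an $L^2$-bounded martingale. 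Hence $g(x_t)$ minus these partial sums is nonincreasing and bounded below, so it converges.
\end{enumerate}
Step 4 then goes through. You also tacitly need $H$ to be jointly measurable so that $x_t$ is $\fil_t$-measurable. The paper adds this in Assumption~\ref{assumption-probability-space-varying} and checks it in Lemma~\ref{lemma-measurability-deterministic}.

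With that repair, your argument is the Bottou-style proof behind Bonnabel's $C^3$ hypothesis. It differs from the paper's route: the paper does not re-prove Theorem~\ref{THM-Bonnabel}, but proves the generalization Theorem~\ref{THM-Omega-varying-deterministic-learning-rate}.

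For $F(x_t)$ the two routes are equivalent in spirit. The paper takes full expectations to get $\sum_t\g_t\E\|\nabla F(x_t)\|^2_{x_t}<\infty$. It then builds the $L^2$-bounded martingale $z_t$ and the monotone sequence $v_t$ by hand, which is what Robbins--Siegmund packages.

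For the gradient, the paper never shows that $g(x_t)$ converges. It proves the increment bound $|g(x_{t+1})-g(x_t)|\le 2A^2(C_1+C_2)\g_t$ from the $R$-Lipschitz property and Lemma~\ref{lemma-gradient-difference}. It then applies the deterministic Lemma~\ref{lemma-Li-Orabona-1} both pathwise and to $\E g(x_t)$. This needs only that $\nabla F$ be $R$-Lipschitz (no $C^3$), and it also yields mean-square convergence.

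Your route needs $C^3$ precisely so that $g$ is $C^2$ and $\langle\nabla g,\nabla F\rangle$ is controlled by $g$. In exchange, it avoids the technical Lemma~\ref{lemma-gradient-difference}, because Taylor-expanding $g\circ R_x$ directly absorbs the change of base point and metric.
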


Since $K$ is compact, the sequence $\{x_t\}$ in Theorem \ref{THM-Bonnabel} has a convergent subsequence, which almost surely converges to a stationary point of $F$.

\subsection{Batch sizes in stochastic gradient descents} 

The scope of Theorem \ref{THM-Bonnabel} is beyond basic Riemannian stochastic gradient descents. For example, for a mini-batch Riemannian stochastic gradient descent\footnote{Since all the stochastic gradient descents in the rest of this manuscript are mini-batch. We drop the word ``mini-batch" for simplicity from now on.} with fixed batch size $b>0$, that is, when updating rule \eqref{eq-SGD-update} is replaced by
\begin{equation}\label{eq-SGD-update-b}
x_{t+1} = R_{x_t}\left(-\frac{\g_t}{b} \sum_{j=tb}^{(t+1)b-1} H(x_t,\omega_j)\right) \text{ for } t \geq 0, 
\end{equation}
one can get convergence results by applying Theorem \ref{THM-Bonnabel} to the averaged random gradient 
\begin{equation}\label{eq-def-H-bar-b}
\overline{H}^{[b]}:\M \times \Omega^{b} \rightarrow T\M \text{ given by } \overline{H}^{[b]}(x,\omega_1,\dots,\omega_b) = \frac{1}{b} \sum_{j=1}^{b} H(x,\omega_j),
\end{equation}
where the probability measure on $\Omega^{b}$ is of course $\mu^{\times b}$.

More recently, there were a good amount of results about improving the efficiency of stochastic gradient descents by allowing the batch sizes to vary. See for example \cite{AHJWNOO,ASLHT,BCN,DYJG,DNG,KI,LLXLK,OZXGTK,PKKKQS,SS,UI2024,UI2025} for studies in this direction. These often involve adaptive batch sizes that increase as the cost functions decrease. Theorem \ref{THM-Bonnabel} does not directly apply to stochastic gradient descents with varying batch sizes. Many authors in this direction performed convergence analysis for their choices of adaptive batch sizes. Notably, Bottou, Curtis and Nocedal established a general mean square convergence theorem \cite[Corollary 4.12]{BCN} for stochastic gradient descents in Euclidean spaces with arbitrarily varying batch sizes.

The main benefit of using larger or increasing batch sizes is the reduction of the variance of the updating vectors, which makes stochastic gradient descents converge faster. See for example \cite{QK}. Several more sophisticated batch forming schemes were introduced to further reduce the variance of the updating vectors. Examples of such schemes can be found in \cite{LT,LX,PLW,ZKM,ZZ}.

\subsection{Contributions}

In this manuscript, we mildly generalize the bounded variance convergence argument to prove convergence theorems in the spirit of Theorem \ref{THM-Bonnabel} for Riemannian stochastic gradient descents with varying batch sizes or unbiased non-standard batch forming schemes. More precisely, we work on the following topics.
\begin{enumerate}
	\item For the bounded variance convergence argument to work, the random inputs $\{\omega_t\}_{t=0}^\infty$ need to be independent of each other. However, we observe that there is really no need for them to have the same distribution, or to even take values in the same probability space. Based on this observation, we generalize Theorem \ref{THM-Bonnabel} to Theorem \ref{THM-Omega-varying-deterministic-learning-rate} below, which allows $\omega_t$ to take values in different probability spaces as $t$ varies. Our proof of Theorem \ref{THM-Omega-varying-deterministic-learning-rate} is mostly the standard bounded variance convergence argument with some cosmetic modifications.
	\item Li and Orabona proved an almost sure convergence theorem \cite[Theorem 2]{Li-Orabona} for stochastic gradient descents in Euclidean spaces with a specific adaptive learning rate formula. This was generalized to Riemannian stochastic gradient descents with fixed batch sizes in \cite{XW-2026,YXW-2024}, where it is also observed that Li and Orabona's adaptive learning rates outperform deterministic learning rates satisfying the standard condition \eqref{standard-condition-learning-rates} in some Riemannian stochastic gradient descents. In the current manuscript, we further generalize \cite[Theorem 2]{Li-Orabona} to Theorem \ref{THM-Omega-varying-adaptive-learning-rate} below, which allows the random input $\omega_t$ to take values in different probability spaces as $t$ varies. Our proof for Theorem \ref{THM-Omega-varying-adaptive-learning-rate} closely follows the arguments by Li and Orabona in \cite{Li-Orabona}. 
	\item As long as the batches are formed independently and the expectation of the updating vector from each batch is the gradient of the cost function, one can apply Theorems \ref{THM-Omega-varying-deterministic-learning-rate} and \ref{THM-Omega-varying-adaptive-learning-rate} to get reasonably general convergence results for stochastic gradient descents. We present examples of such applications for three different batch forming schemes in Corollaries \ref{cor-batch-with-repetitions}, \ref{cor-batch-no-repetitions} and \ref{cor-batch-stratified}.
	\item Moreover, we apply the idea of ``confinements" from \cite{XYW-2024,YXW-2024} to stochastic gradient descents with varying batch sizes. This idea is rooted in \cite{Bonnabel,Bottou}. As a result, in some special and yet widely applicable cases, instead of assuming that all iterates are contained in a compact set based on some empirical evidence, we can conclusively prove this compactness assumption without running the algorithm. 
\end{enumerate}

\section{Convergence Theorems}\label{sec-THM}

In this section, we state our convergence theorems. The proofs of these theorems are contained in Section \ref{app-proofs} below.

\subsection{Retraction-dependent Lipschitz tangent vector fields}

Before stating our convergence theorems, let us introduce a notion of retraction-dependent Lipschitz tangent vector fields. Recall that if $T:V\rightarrow W$ is a linear function between two finite dimensional inner product spaces over $\R$, then there is a unique linear function $\adj(T):W \rightarrow V$, called the adjoint of $T$, satisfying that $\left\langle \mathbf{v}, \adj(T)(\mathbf{w})\right\rangle_V = \left\langle T(\mathbf{v}), \mathbf{w}\right\rangle_W$ for all $\mathbf{v} \in V$ and $\mathbf{w} \in W$, where $\left\langle \bullet, \bullet\right\rangle_V$ and $\left\langle \bullet, \bullet\right\rangle_W$ are the inner products on $V$ and $W$. If we fix orthonormal bases on $V$ and $W$, then the matrices of $T$ and $\adj(T)$ under both bases are transpositions of each other.

Let $\M$ be a Riemannian manifold, and $R:T\M \rightarrow \M$ a retraction on $\M$. For any $x \in \M$ and $\mathbf{u} \in T_x\M$, the differential $dR_x|_{\mathbf{u}}:T_{\mathbf{u}}(T_x\M) (\cong T_x\M) \rightarrow T_{R_x(\mathbf{u})} \M$ is a linear function. To simplify the notations, we will always identify $T_{\mathbf{u}}(T_x\M)$ with $T_x\M$ via the standard isomorphism. Then the adjoint of $dR_x|_{\mathbf{u}}$ is the linear function $\adj(dR_x|_{\mathbf{u}}): T_{R_x(\mathbf{u})} \M \rightarrow T_x \M$ satisfying $\left\langle \mathbf{v}, \adj(dR_x|_{\mathbf{u}})(\mathbf{w})\right\rangle_x = \left\langle dR_x|_{\mathbf{u}}(\mathbf{v}), \mathbf{w}\right\rangle_{R_x(\mathbf{u})}$ for all $\mathbf{v} \in T_x\M$ and $\mathbf{w} \in T_{R_x(\mathbf{u})} \M$, where $\left\langle \bullet, \bullet\right\rangle_x$ is the Riemannian inner product on $T_x \M$.

The following is a slight generalization of Definition \cite[Definition 2.3]{XYW-2024}. 

\begin{definition}\label{def-Lipschitz}
Let $\M$ be a Riemannian manifold, and $R:T\M \rightarrow \M$ a retraction on $\M$. For a subset $K$ of $\M$ and a positive number $r$, a vector field $\mathbf{v}$ on $\M$ is called $R$-Lipschitz on $K$ up to radius $r$ if there is a positive constant $C$ depending on $K$ and $r$ such that 
\begin{equation}\label{eq-def-Lipschitz}
\|\adj(dR_x|_{\mathbf{u}})(\mathbf{v}(R_x(\mathbf{u}))) - \mathbf{v}(x)\|_x \leq C\|\mathbf{u}\|_x
\end{equation}
for all $x \in K$ and $\mathbf{u} \in T_x \M$ satisfying $\|\mathbf{u}\|_x \leq r$.

$\mathbf{v}$ is called locally $R$-Lipschitz on $\M$ if, for every compact subset $K$ of $\M$ and every $r>0$, $\mathbf{v}$ is $R$-Lipschitz on $K$ up to radius $r$.
\end{definition}

\subsection{Two convergence theorems} Let us state some reoccurring assumptions first.

\begin{assumption}\label{assumption-cost-function}
\begin{enumerate}
  \item $\M$ is a connected Riemannian manifold.
	\item $R:T\M \rightarrow \M$ is a retraction.
	\item $F:\M\rightarrow \R$ is a $C^1$ cost function.
	\item $K$ is a compact subset of $\M$.
	\item there is a constant $A>0$ such that $\nabla F$ is $R$-Lipschitz on $K$ up to radius $A$. 
\end{enumerate}
\end{assumption}

\begin{assumption}\label{assumption-probability-space-varying}
\begin{enumerate}
	\item $\{(\Omega_t,\fil_t, \mu_t)\}_{t=0}^\infty$ is a sequence of probability spaces, and $\{H_t:\M \times \Omega_t \rightarrow T\M\}_{t=0}^\infty$ is a sequence of functions such that 
	\begin{enumerate}
		\item $H_t(\bullet, \omega): \M\rightarrow T\M$ is a tangent vector field on $\M$ for all $t\geq 0$ and $\omega \in \Omega_t$,
		\item 
		 \begin{enumerate}
		  \item either, for every $t\geq 0$, the sample space $\Omega_t$ is a finite set, and the event space $\fil_t$ consists of all the subsets of $\Omega_t$,
		  \item or, for every $t\geq 0$, $H_t:(\M \times \Omega_t, \B_{\M}\otimes \fil_t) \rightarrow (T\M, \B_{T\M})$ is measurable where
			 \begin{itemize}
				 \item $\B_{\M}$ is the Borel $\sigma$-algebra on $\M$ generated by the topology of $\M$,
				 \item $\B_{T\M}$ is the Borel $\sigma$-algebra on $T\M$ generated by the topology of $T\M$,
				 \item $\B_{\M}\otimes \fil_t$ is the product $\sigma$-algebra on $\M \times \Omega_t$ generated by $\{P\times Q~\big{|}~P\in \B_{\M}, ~Q \in \fil_t\}$.
			 \end{itemize}
	   \end{enumerate}
	  \item $\E_{\Omega_t}(H_t(x, \omega)) :=\int_{\Omega_t} H_t(x, \omega) d\mu_t = \nabla F(x)$ for all $t\geq 0$ and  $x \in \M$. 
	\end{enumerate}
	\item For the compact set $K$ and the constant $A$ in Assumption \ref{assumption-cost-function}, we have that $\| H_t(x,\omega)\|_x \leq A$ for all $t\geq 0$, $x \in K$ and $\omega \in \Omega_t$.
  \item $\{\omega_t\}_{t=0}^\infty$ is a sequence of independent random variables such that $\omega_t$ takes value in $\Omega_t$ with probability distribution $\mu_t$.
\end{enumerate}
\end{assumption}

Now we are ready to state our generalization of Bonnabel's Theorem \ref{THM-Bonnabel}.

\begin{theorem}\label{THM-Omega-varying-deterministic-learning-rate}
Suppose that Assumptions \ref{assumption-cost-function} and \ref{assumption-probability-space-varying} are true. Further assume that
\begin{enumerate}
	\item $\{\g_t\}_{t=0}^\infty$ is a sequence of positive learning rates satisfying condition \eqref{standard-condition-learning-rates} and that $\g_t\leq 1$ for $t\geq 0$,
	\item $x_0$ is a fixed point in $K$ and $\{x_t\}_{t=0}^\infty \subset \M$ is the sequence of iterates generated by 
	\begin{equation}\label{eq-SGD-update-Omega-varying-deterministic-learning-rate}
	x_{t+1} = R_{x_t}(-\g_t H_t(x_t,\omega_t)) \text{ for } t \geq 0, 
	\end{equation}
	\item $\{x_t\}_{t=0}^\infty \subset K$.
\end{enumerate}
Then $F(x_t)$ converges almost surely and $\|\nabla F(x_t)\|_{x_t} \rightarrow 0$ both almost surely and in mean square. 
\end{theorem}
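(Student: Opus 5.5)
The plan is to run the standard bounded-variance argument, with the independence in Assumption \ref{assumption-probability-space-varying} used only through conditional expectations. First, a descent inequality. Fix $t$ and put $\mathbf{u}_t=-\g_t H_t(x_t,\omega_t)$. Since $x_t\in K$ and $\g_t\le 1$, we have $\|\mathbf{u}_t\|_{x_t}\le A$. Set $\vf(s)=F(R_{x_t}(s\mathbf{u}_t))$ for $s\in[0,1]$. By the chain rule and the definition of the adjoint,
\[
\vf'(s)=\left\langle \mathbf{u}_t, \adj(dR_{x_t}|_{s\mathbf{u}_t})(\nabla F(R_{x_t}(s\mathbf{u}_t)))\right\rangle_{x_t}.
\]
Since $\nabla F$ is $R$-Lipschitz on $K$ up to radius $A$ (Definition \ref{def-Lipschitz}, Assumption \ref{assumption-cost-function}), we get $|\vf'(s)-\langle \mathbf{u}_t,\nabla F(x_t)\rangle_{x_t}|\le Cs\|\mathbf{u}_t\|_{x_t}^2$. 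Integrating over $[0,1]$ gives
\[
F(x_{t+1})\le F(x_t)-\g_t\langle H_t(x_t,\omega_t),\nabla F(x_t)\rangle_{x_t}+\tfrac{C}{2}A^2\g_t^2 .
\]
This is the only place the geometry enters, and it needs only $C^1$ regularity of $F$.

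Second, conditional expectations. Let $\fil^{(t)}$ be the $\sigma$-algebra generated by $\omega_0,\dots,\omega_{t-1}$. Then $x_t$ is $\fil^{(t)}$-measurable, and $\omega_t$ is independent of $\fil^{(t)}$. The measurability hypotheses in Assumption \ref{assumption-probability-space-varying} (finite $\Omega_t$, or joint measurability of $H_t$) are what justify this step via Fubini/freezing. It yields $\E[\langle H_t(x_t,\omega_t),\nabla F(x_t)\rangle\mid\fil^{(t)}]=\|\nabla F(x_t)\|^2_{x_t}$. Hence
\[
\E[F(x_{t+1})\mid\fil^{(t)}]\le F(x_t)-\g_t\|\nabla F(x_t)\|^2_{x_t}+\tfrac{C A^2}{2}\g_t^2.
\]
$F$ is bounded below on $K$ and $\sum\g_t^2<\infty$, so the Robbins--Siegmund theorem (or the quasi-martingale argument Bonnabel uses) gives two conclusions. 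First, $F(x_t)$ converges almost surely. Second, $\sum_t\g_t\|\nabla F(x_t)\|^2<\infty$ almost surely. Taking full expectations and telescoping also gives $\sum_t\g_t\E\|\nabla F(x_t)\|^2<\infty$.

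Third, upgrading to $\|\nabla F(x_t)\|\to0$. This is the main obstacle, since $\sum\g_t=\infty$ only forces $\liminf=0$. I would bound the increment of $g_t:=\|\nabla F(x_t)\|^2_{x_t}$. The Lipschitz condition applied with $\mathbf{u}=\mathbf{u}_t$ compares $\nabla F(x_{t+1})$, pulled back via $\adj(dR_{x_t}|_{\mathbf{u}_t})$, with $\nabla F(x_t)$. Because $dR$ is continuous and $dR_x(\mathbf{0}_x)=\id$, the norm of $\adj(dR_{x_t}|_{\mathbf{u}})$ and its inverse are controlled on the compact set $\{(x,\mathbf{u}):x\in K,\|\mathbf{u}\|\le A\}$ up to a factor $1+O(\|\mathbf{u}\|)$. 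From this I would aim to show $|g_{t+1}-g_t|\le C'\g_t$ with a deterministic constant $C'$. If $C^2$ regularity of $R$ only gives the adjoint's operator norm bounded with $\|\adj - \id\|$ continuous, I would instead use that $\nabla F$ is uniformly continuous on $K$ and $\g_t\to0$.

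With this increment bound in hand, the standard lemma finishes the almost sure claim. A nonnegative sequence with $\sum\g_tg_t<\infty$, $\sum\g_t=\infty$ and $|g_{t+1}-g_t|\le C'\g_t$ must satisfy $g_t\to0$. The proof is by contradiction: upcrossings between levels $\epsilon$ and $2\epsilon$ each cost at least $\epsilon^2/C'$ in $\sum\g_tg_t$. For mean square, I would apply the same lemma to the deterministic sequence $e_t=\E g_t$. We have $\sum\g_te_t<\infty$ from the second step, and $|e_{t+1}-e_t|\le\E|g_{t+1}-g_t|\le C'\g_t$. Hence $\E\|\nabla F(x_t)\|^2\to0$. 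Alternatively, dominated convergence applies, since $g_t\le A^2$ on $K$, as $\nabla F(x)=\E H_t$ has norm at most $A$.
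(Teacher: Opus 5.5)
Your proposal is correct and is essentially the paper's proof. It uses the same descent inequality from the $R$-Lipschitz condition, the same conditional-unbiasedness step, the same $O(\g_t)$ bound on $|g_{t+1}-g_t|$, and the same Alber--Iusem--Solodov lemma applied to $g_t$ and to $\E g_t$; your adjoint-norm sketch is the content of Lemma \ref{lemma-gradient-difference}, which the paper imports rather than proves. The only real variation is that you invoke Robbins--Siegmund where the paper builds the $L^2$-bounded martingale $z_t=\sum_{\tau\le t}\g_\tau u_\tau$ and the monotone sequence $v_t$ by hand, and your dominated-convergence route to mean square is a valid shortcut.

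One caution on the increment bound. Because $R$ is $C^2$ (Definition \ref{def-retraction}), the singular values of $dR_x|_{\mathbf{u}}$ satisfy $|\sigma_i-1|\le C\|\mathbf{u}\|_x$ uniformly for $x\in K$ and $\|\mathbf{u}\|_x\le A$, and that is all you need. Phrase it this way rather than via the inverse, since $dR_x|_{\mathbf{u}}$ can be singular when $A$ is large. You should also drop the uniform-continuity fallback: it only yields $|g_{t+1}-g_t|\to 0$, which is too weak for the lemma.
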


Next, we generalize Li and Orabona's convergence theorem \cite[Theorem 2]{Li-Orabona}.

\begin{theorem}\label{THM-Omega-varying-adaptive-learning-rate}
Suppose that Assumptions \ref{assumption-cost-function} and \ref{assumption-probability-space-varying} are true. Further assume that
\begin{enumerate}
	\item $x_0$ is a fixed point in $K$. $\alpha,~\beta$ and $\ve$ are fixed positive numbers satisfying  $0 < \ve \leq \frac{1}{2}$ and $ 0<\alpha \leq \beta^{\frac{1}{2}+\ve}$,
	\item the sequences of iterates $\{x_t\}_{t=0}^\infty \subset \M$ and adaptive learning rates $\{\eta_t\}_{t=0}^\infty$ are  generated by
	\begin{equation}\label{eq-SGD-update-Omega-varying-adaptive-learning-rate}
	\begin{cases}
	x_{t+1} = R_{x_t}(-\eta_t H_t(x_t,\omega_t)) \\
	\eta_t = \frac{\alpha}{(\beta + \sum_{k=0}^{t-1}\|H_k(x_k,\omega_k)\|_{x_k}^2)^{\frac{1}{2}+\ve}}
	\end{cases}\text{ for } t \geq 0, 
	\end{equation}
	\item $\{x_t\}_{t=0}^\infty \subset K$.
\end{enumerate}
Then $F(x_t)$ converges almost surely and $\|\nabla F(x_t)\|_{x_t} \rightarrow 0$ almost surely. 
\end{theorem}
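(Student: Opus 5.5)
Our plan is to follow Li and Orabona's argument for \cite[Theorem 2]{Li-Orabona}, with the Euclidean descent lemma replaced by a retraction-based one. Write $H_t := H_t(x_t,\omega_t)$ and let $\fil_t$ denote the $\sigma$-algebra generated by $\omega_0,\dots,\omega_{t-1}$. With this filtration, $x_t$ and $\eta_t$ are $\fil_t$-measurable, and because $\omega_t$ is independent of $\fil_t$, Assumption \ref{assumption-probability-space-varying} gives $\E[H_t\mid\fil_t]=\nabla F(x_t)$. This is the only place where the varying probability spaces enter, and measurability is handled by either alternative in (1)(b).

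\textbf{Descent inequality.} Let $\mathbf{u}=-\eta_tH_t$. Since $\eta_t\le \alpha\beta^{-1/2-\ve}\le 1$ and $\|H_t\|\le A$, we have $\|\mathbf{u}\|\le A$. Set $\phi(s)=F(R_{x_t}(s\mathbf{u}))$. Then $\phi'(s)=\langle \adj(dR_{x_t}|_{s\mathbf{u}})\nabla F(R_{x_t}(s\mathbf{u})),\mathbf{u}\rangle$, and the $R$-Lipschitz property of Assumption \ref{assumption-cost-function} gives $|\phi'(s)-\langle\nabla F(x_t),\mathbf{u}\rangle|\le Cs\|\mathbf{u}\|^2$. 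Integrating yields
\[ F(x_{t+1})\le F(x_t)-\eta_t\langle\nabla F(x_t),H_t\rangle+\tfrac{C}{2}\eta_t^2\|H_t\|^2. \]
Since $F$ is continuous on the compact set $K$, it is bounded below there.

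\textbf{Summability of the second-order terms.} Set $S_t=\beta+\sum_{k<t}\|H_k\|^2$. Then $\eta_t^2\|H_t\|^2=\alpha^2(S_{t+1}-S_t)/S_t^{1+2\ve}$. Because $S_{t+1}\le S_t+A^2$ and $S_t\ge\beta$, we have $S_{t+1}/S_t\le 1+A^2/\beta$. Comparing with $\int ds/s^{1+2\ve}$ then shows that $\sum_t\eta_t^2\|H_t\|^2$ is bounded by a deterministic constant.

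\textbf{Supermartingale step.} The first-order term is $\eta_t\langle\nabla F(x_t),H_t\rangle$, and $\eta_t$ is $\fil_t$-measurable. Taking conditional expectations gives $\E[F(x_{t+1})\mid\fil_t]\le F(x_t)-\eta_t\|\nabla F(x_t)\|^2+\frac C2\E[\eta_t^2\|H_t\|^2\mid\fil_t]$. By the Robbins--Siegmund theorem, or by a direct telescoping argument followed by martingale convergence, $F(x_t)$ converges a.s. and $\sum_t\eta_t\|\nabla F(x_t)\|^2<\infty$ a.s. Since $S_t\le\beta+tA^2$, we get $\eta_t\ge\alpha(\beta+tA^2)^{-1/2-\ve}$, so $\sum\eta_t=\infty$ deterministically. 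Hence $\liminf\|\nabla F(x_t)\|=0$ a.s.

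\textbf{Main obstacle: upgrading liminf to lim.} I would follow Li--Orabona's lemma that if $\sum a_t b_t<\infty$, $\sum a_t=\infty$, and $|b_{t+1}-b_t|\le L a_t$, then $b_t\to0$. Take $b_t=\|\nabla F(x_t)\|^2$; this deterministic-step version is too strong here, so I would use the $\eta$-weighted variant in which the increments are controlled by $\eta_t\|H_t\|$. We need $|b_{t+1}-b_t|\lesssim d(x_t,x_{t+1})\lesssim\eta_t\|H_t\|$. The distance bound is clear. For $b$, I would use the $R$-Lipschitz inequality, which compares $\adj(dR)\nabla F(x_{t+1})$ with $\nabla F(x_t)$. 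Since $dR_x$ is continuous with $dR_x(\mathbf 0)=\id$, the adjoint has norm bounded above and below on the compact set $\{x\in K,\|\mathbf{u}\|\le A\}$. After shrinking to a small radius so that the adjoint is close to an isometry, this gives a bound $|\,\|\nabla F(x_{t+1})\|-c\|\nabla F(x_t)\|\,|$ that I must refine into $|b_{t+1}-b_t|\le L'\eta_t\|H_t\|+L''\eta_t^2\|H_t\|^2$. The correction comes from $\|\adj(dR_x|_{\mathbf{u}})\|-1=O(\|\mathbf{u}\|)$, since $R$ is $C^2$.

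Next I need $\sum\eta_t\|H_t\|\,b_t<\infty$. Write $\eta_t\|H_t\|b_t\le\eta_tb_t\cdot A$, which is summable. With this in hand, the upcrossing argument runs as follows. Suppose $b_t$ exceeds $2\delta$ infinitely often while dropping below $\delta$ infinitely often. Each upcrossing requires $\sum\eta_t\|H_t\|\gtrsim\delta$ over an interval on which $b\ge\delta$, so the sum $\sum\eta_tb_t\|H_t\|$ picks up a mass of about $\delta^2$ each time. This contradicts summability. The contradiction uses summability of $\sum\eta_t^2\|H_t\|^2$ to absorb the quadratic error terms.

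I expect the delicate step to be the Lipschitz-type control of $\|\nabla F\|^2$ along retraction steps using only Definition \ref{def-Lipschitz} and $C^2$ regularity of $R$.
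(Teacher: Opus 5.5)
Your proposal is correct and follows essentially the paper's route: the $R$-Lipschitz descent inequality, a deterministic bound on $\sum_t\eta_t^2\|H_t(x_t,\omega_t)\|_{x_t}^2$, almost sure summability of $\sum_t\eta_t\|\nabla F(x_t)\|_{x_t}^2$ together with $\sum_t\eta_t=\infty$, and finally Lemma~\ref{lemma-Li-Orabona-1} applied to a per-step bound on $\|\nabla F(x_t)\|_{x_t}^2$. That per-step bound comes from the $R$-Lipschitz property plus the two-sided near-isometry estimate $\|dR_x|_{\mathbf{u}}\circ\adj(dR_x|_{\mathbf{u}})-\id\|=O(\|\mathbf{u}\|_x)$, which is the content of Lemma~\ref{lemma-gradient-difference}; your use of Robbins--Siegmund and of an integral comparison only replaces the paper's explicit martingale $z_t$ and Lemma~\ref{lemma-Li-Orabona-2}. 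One correction: Lemma~\ref{lemma-Li-Orabona-1} is not too strong here, because it is applied pathwise with the random sequence $a_t=\eta_t$, and $\bigl|\|\nabla F(x_{t+1})\|_{x_{t+1}}^2-\|\nabla F(x_t)\|_{x_t}^2\bigr|\le 2A^2(C_1+C_2)\eta_t$ holds with a deterministic constant since $\|H_t\|\le A$ on $K$. So your $\eta_t\|H_t\|$-weighted upcrossing variant and the quadratic correction term are unnecessary, though harmless.
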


\subsection{Batch forming schemes} Theorems \ref{THM-Omega-varying-deterministic-learning-rate} and \ref{THM-Omega-varying-adaptive-learning-rate} apply to reasonably general stochastic gradient descents with varying batch sizes as long as the batches are formed independently and the expectation of the updating vector from each batch is the gradient of the cost function. We demonstrate this for two commonly used batch forming schemes and the stratified sampling from \cite{ZZ} in Corollaries \ref{cor-batch-with-repetitions}, \ref{cor-batch-no-repetitions} and \ref{cor-batch-stratified}. 

First, one can simply cut a sequence of independent identically distributed random variables into segments to use as batches. Note that this scheme does not preclude repetitions within each batch.

\begin{assumption}\label{assumption-probability-space-fixed}
$(\Omega,\fil, \mu)$ is a probability space, and $H:\M \times \Omega \rightarrow T\M$ is a function such that
  \begin{enumerate}
		\item $H(\bullet, \omega): \M\rightarrow T\M$ is a tangent vector field on $\M$ for every $\omega \in \Omega$,
		\item 
		 \begin{enumerate}
		  \item either the sample space $\Omega$ is a finite set, and the event space $\fil$ consists of all the subsets of $\Omega$,
		  \item or  $H:(\M \times \Omega, \B_{\M}\otimes \fil) \rightarrow (T\M, \B_{T\M})$ is measurable where
			 \begin{itemize}
				 \item $\B_{\M}$ is the Borel $\sigma$-algebra on $\M$ generated by the topology of $\M$,
				 \item $\B_{T\M}$ is the Borel $\sigma$-algebra on $T\M$ generated by the topology of $T\M$,
				 \item $\B_{\M}\otimes \fil$ is the product $\sigma$-algebra on $\M \times \Omega$ generated by $\{P\times Q~\big{|}~P\in \B_{\M}, ~Q \in \fil\}$.
			 \end{itemize}
	   \end{enumerate}
	  \item $\E(H(x, \omega)) :=\int_{\Omega} H(x, \omega) d\mu = \nabla F(x)$ for every  $x \in \M$, 
	 \item for the compact set $K$ and the constant $A$ in Assumption \ref{assumption-cost-function}, we have that $\| H(x,\omega)\|_x \leq A$ for all $x \in K$ and $\omega \in \Omega$.
	\end{enumerate}
\end{assumption}

\begin{corollary}\label{cor-batch-with-repetitions}
Suppose that Assumptions \ref{assumption-cost-function} and \ref{assumption-probability-space-fixed} are true and that 
\begin{enumerate}
  \item $\{\omega_t\}_{t=0}^\infty$ is a sequence of independent random variables such that $\omega_t$ takes value in $\Omega$ with probability distribution $\mu$,
	\item $\{S_t\}_{t=0}^\infty$ is a strictly increasing sequence of integers with $S_0 =0$.
\end{enumerate}
We have the following conclusions.
\begin{enumerate}[1.]
	\item Further assume that
\begin{enumerate}
	\item $\{\g_t\}_{t=0}^\infty$ is a sequence of positive learning rates satisfying condition \eqref{standard-condition-learning-rates} and that $\g_t\leq 1$ for $t\geq 0$,
	\item $x_0$ is a fixed point in $K$ and $\{x_t\}_{t=0}^\infty \subset \M$ is the sequence of iterates generated by
	\begin{equation}\label{eq-SGD-update-batch-size-varying-deterministic-learning-rate}
	x_{t+1} = R_{x_t}\left(-\frac{\g_t}{S_{t+1}-S_t}\sum_{j=S_t}^{S_{t+1}-1} H(x_t,\omega_j)\right) \text{ for } t \geq 0, 
	\end{equation}
	\item $\{x_t\}_{t=0}^\infty \subset K$.
\end{enumerate}
Then $F(x_t)$ converges almost surely and $\|\nabla F(x_t)\|_{x_t} \rightarrow 0$ both almost surely and in mean square.
  \item Further assume that
\begin{enumerate}
	\item $x_0$ is a fixed point in $K$. $\alpha,~\beta$ and $\ve$ are fixed positive numbers satisfying  $0 < \ve \leq \frac{1}{2}$ and $ 0<\alpha \leq \beta^{\frac{1}{2}+\ve}$,
	\item the sequences of iterates $\{x_t\}_{t=0}^\infty \subset \M$ and adaptive learning rates $\{\eta_t\}_{t=0}^\infty$ are  generated by
	\begin{equation}\label{eq-SGD-update-batch-varying-adaptive-learning-rate}
	\begin{cases}
	x_{t+1} = R_{x_t}\left(-\frac{\eta_t}{S_{t+1}-S_t}\sum_{j=S_t}^{S_{t+1}-1} H(x_t,\omega_j)\right) \\
	\eta_t = \frac{\alpha}{(\beta + \sum_{k=0}^{t-1}\|\frac{1}{S_{k+1}-S_k}\sum_{j=S_k}^{S_{k+1}-1} H(x_k,\omega_j)\|_{x_k}^2)^{\frac{1}{2}+\ve}}
	\end{cases}\text{ for } t \geq 0, 
	\end{equation}
	\item $\{x_t\}_{t=0}^\infty \subset K$.
\end{enumerate}
Then $F(x_t)$ converges almost surely and $\|\nabla F(x_t)\|_{x_t} \rightarrow 0$ almost surely. 
\end{enumerate}
\end{corollary}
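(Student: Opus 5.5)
The plan is to reduce Corollary \ref{cor-batch-with-repetitions} directly to Theorems \ref{THM-Omega-varying-deterministic-learning-rate} and \ref{THM-Omega-varying-adaptive-learning-rate}. Write $b_t = S_{t+1}-S_t \geq 1$. Then regroup the i.i.d.\ sequence into blocks. Concretely, set
\[
\Omega_t = \Omega^{b_t},\quad \fil_t = \fil^{\otimes b_t},\quad \mu_t = \mu^{\times b_t},\quad \tilde\omega_t = (\omega_{S_t},\dots,\omega_{S_{t+1}-1}),
\]
and define $H_t:\M\times\Omega_t\to T\M$ as the averaged field $H_t = \overline{H}^{[b_t]}$ from \eqref{eq-def-H-bar-b}. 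That is, $H_t(x,\xi_1,\dots,\xi_{b_t}) = \frac{1}{b_t}\sum_{j=1}^{b_t} H(x,\xi_j)$. With these choices, updates \eqref{eq-SGD-update-batch-size-varying-deterministic-learning-rate} and \eqref{eq-SGD-update-batch-varying-adaptive-learning-rate} become literally \eqref{eq-SGD-update-Omega-varying-deterministic-learning-rate} and \eqref{eq-SGD-update-Omega-varying-adaptive-learning-rate}, including the formula for $\eta_t$. So it only remains to verify Assumption \ref{assumption-probability-space-varying}. Assumption \ref{assumption-cost-function} and the remaining hypotheses ($x_0\in K$, the $\g_t$ or $\alpha,\beta,\ve$ conditions, $\{x_t\}\subset K$) transfer verbatim.

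The verification goes item by item.
\begin{enumerate}
\item $H_t(\bullet,\xi)$ is a tangent vector field, since it is an average of tangent vector fields in the same fibre $T_x\M$.
\item For measurability there are two cases. If $\Omega$ is finite with the power set, then $\Omega^{b_t}$ is finite and $\fil^{\otimes b_t}$ is its power set, so case (i) holds. Otherwise, each map $(x,\xi)\mapsto H(x,\xi_j)$ is the composition of the measurable projection $(\M\times\Omega^{b_t},\B_\M\otimes\fil^{\otimes b_t})\to(\M\times\Omega,\B_\M\otimes\fil)$ with $H$. The fibrewise sum and scalar multiplication $T\M\oplus T\M\to T\M$ are continuous. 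Hence the average is Borel measurable, which is easiest to see in local trivializations, or by embedding $T\M$ in a Euclidean space.
\item Unbiasedness follows from Fubini and linearity:
\[
\E_{\Omega_t}H_t(x,\cdot) = \frac{1}{b_t}\sum_j \int_\Omega H(x,\xi_j)\,d\mu(\xi_j) = \nabla F(x).
\]
\item The bound $\|H_t(x,\xi)\|_x\le \frac{1}{b_t}\sum_j\|H(x,\xi_j)\|_x\le A$ on $K$ follows from the triangle inequality.
\item Independence holds because the $\tilde\omega_t$ are built from pairwise disjoint blocks of the independent family $\{\omega_j\}$. Disjoint blocks of an independent family are independent, by the standard grouping lemma, and each $\tilde\omega_t$ has law $\mu^{\times b_t}$.
\end{enumerate}

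Then part 1 follows from Theorem \ref{THM-Omega-varying-deterministic-learning-rate} and part 2 from Theorem \ref{THM-Omega-varying-adaptive-learning-rate}.

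The only step that needs any care is the measurability in item 2 of case (ii). The clean way to handle it is to note that fibrewise addition is a continuous map on the closed subset $\{(u,v): \pi(u)=\pi(v)\}$ of $T\M\times T\M$. Measurability of the product map into $T\M\times T\M$ with respect to $\B_{T\M}\otimes\B_{T\M}=\B_{T\M\times T\M}$ uses second countability of $T\M$. Everything else is bookkeeping.
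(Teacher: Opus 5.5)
Your proposal is correct and follows the paper's proof. The paper likewise sets $\Omega_t=\Omega^{S_{t+1}-S_t}$, takes the block $(\omega_{S_t},\dots,\omega_{S_{t+1}-1})$ as the $t$-th random input and $H_t=\overline{H}^{[S_{t+1}-S_t]}$, checks unbiasedness, and invokes Theorems \ref{THM-Omega-varying-deterministic-learning-rate} and \ref{THM-Omega-varying-adaptive-learning-rate}; the only difference is that you spell out the verification of Assumption \ref{assumption-probability-space-varying} (measurability, the bound $A$, independence of disjoint blocks), which the paper dismisses as straightforward.
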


It is also quite common to disallow repetitions in the same batch. To simplify the formulation, let us consider the scheme in which the batches are sampled uniformly and without repetitions in the same batch.

\begin{assumption}\label{assumption-batch-no-repetitions}
\begin{enumerate}
	\item $\Omega =\{1, 2, \dots, N\}$ is equipped with the probability measure $\mu$ given by $\mu(\{l\})=\frac{1}{N}$ for every $l \in \Omega$.
	\item For every $1\leq b\leq N$,
	\begin{enumerate}
		\item $\p_b(\Omega)$ is the set of subsets of $\Omega$ with $b$ elements,
		\item $\p_b(\Omega)$ is equipped with the probability measure $\mu_b$ given by $\mu_b(\{Y\})=\frac{1}{\bn{N}{b}}$ for every $Y \in \p_b(\Omega)$.
	\end{enumerate}
	\item $H:\M \times \Omega \rightarrow T\M$ is a function satisfying that
	\begin{enumerate}
		\item for every $l \in \Omega$, $H(\bullet, l): \M\rightarrow T\M$ is a tangent vector field on $\M$ ,
		\item for every $x \in \M$, $\E(H(x, l)) :=\frac{1}{N} \sum_{l=1}^N H(x, l) = \nabla F(x)$.
	\end{enumerate}
	\item For the compact set $K$ and the constant $A$ in Assumption \ref{assumption-cost-function}, we have that $\| H(x,l)\|_x \leq A$ for all $x \in K$ and $l \in \Omega$.
	\item $\{b_t\}_{t=0}^\infty$ is a sequence of  integers satisfying $1\leq b_t \leq N$ for $t\geq 0$.
  \item $\{Y_t\}_{t=0}^\infty$ is a sequence of independent random variables such that $Y_t$ takes value in $\p_{b_t}(\Omega)$ with probability distribution $\mu_{b_t}$.
\end{enumerate}
\end{assumption}

\begin{corollary}\label{cor-batch-no-repetitions}
Suppose that Assumptions \ref{assumption-cost-function} and \ref{assumption-batch-no-repetitions} are true. We have the following conclusions.
\begin{enumerate}[1.]
	\item Further assume that
\begin{enumerate}
	\item $\{\g_t\}_{t=0}^\infty$ is a sequence of positive learning rates satisfying condition \eqref{standard-condition-learning-rates} and that $\g_t\leq 1$ for $t\geq 0$,
	\item $x_0$ is a fixed point in $K$ and $\{x_t\}_{t=0}^\infty \subset \M$ is the sequence of iterates generated by
	\begin{equation}\label{eq-SGD-update-batch-no-repetitions-deterministic-learning-rate}
	x_{t+1} = R_{x_t}\left(-\frac{\g_t}{b_t}\sum_{y \in Y_t} H(x_t,y)\right) \text{ for } t \geq 0, 
	\end{equation}
	\item $\{x_t\}_{t=0}^\infty \subset K$.
\end{enumerate}
Then $F(x_t)$ converges almost surely and $\|\nabla F(x_t)\|_{x_t} \rightarrow 0$ both almost surely and in mean square.
  \item Further assume that
\begin{enumerate}
	\item $x_0$ is a fixed point in $K$. $\alpha,~\beta$ and $\ve$ are fixed positive numbers satisfying  $0 < \ve \leq \frac{1}{2}$ and $ 0<\alpha \leq \beta^{\frac{1}{2}+\ve}$,
	\item the sequences of iterates $\{x_t\}_{t=0}^\infty \subset \M$ and adaptive learning rates $\{\eta_t\}_{t=0}^\infty$ are  generated by
	\begin{equation}\label{eq-SGD-update-batch-no-repetitions-adaptive-learning-rate}
	\begin{cases}
	x_{t+1} = R_{x_t}\left(-\frac{\eta_t}{b_t}\sum_{y \in Y_t} H(x_t,y)\right) \\
	\eta_t = \frac{\alpha}{(\beta + \sum_{k=0}^{t-1}\|\frac{1}{b_k}\sum_{y\in Y_k} H(x_k,y)\|_{x_k}^2)^{\frac{1}{2}+\ve}}
	\end{cases}\text{ for } t \geq 0, 
	\end{equation}
	\item $\{x_t\}_{t=0}^\infty \subset K$.
\end{enumerate}
 $F(x_t)$ converges almost surely and $\|\nabla F(x_t)\|_{x_t} \rightarrow 0$ almost surely.
\end{enumerate}
\end{corollary}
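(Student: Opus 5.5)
Corollary \ref{cor-batch-no-repetitions} follows directly from Theorems \ref{THM-Omega-varying-deterministic-learning-rate} and \ref{THM-Omega-varying-adaptive-learning-rate}, once the batch scheme is recast in the language of Assumption \ref{assumption-probability-space-varying}. For each $t\geq 0$ I will set $\Omega_t=\p_{b_t}(\Omega)$. The event space $\fil_t$ is the power set of this finite set, and the measure is $\mu_t=\mu_{b_t}$. Define
\[
H_t:\M\times \p_{b_t}(\Omega)\rightarrow T\M,\qquad H_t(x,Y)=\frac{1}{b_t}\sum_{y\in Y}H(x,y).
\]
With $\omega_t=Y_t$, the update rules \eqref{eq-SGD-update-batch-no-repetitions-deterministic-learning-rate} and \eqref{eq-SGD-update-batch-no-repetitions-adaptive-learning-rate} become exactly \eqref{eq-SGD-update-Omega-varying-deterministic-learning-rate} and \eqref{eq-SGD-update-Omega-varying-adaptive-learning-rate}. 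The adaptive rate $\eta_t$ is then literally the one in Theorem \ref{THM-Omega-varying-adaptive-learning-rate}, because $\|H_k(x_k,Y_k)\|_{x_k}$ is the norm appearing in the denominator.

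Next I verify Assumption \ref{assumption-probability-space-varying} item by item.
\begin{enumerate}
\item[(1a)] $H_t(\bullet,Y)$ is a finite linear combination of the tangent vector fields $H(\bullet,y)$, so it is a tangent vector field.
\item[(1b)] Alternative (i) holds, since each $\Omega_t$ is finite with the full power set as event space. No measurability issue arises.
\item[(2)] For $x\in K$, the triangle inequality and Assumption \ref{assumption-batch-no-repetitions}(4) give $\|H_t(x,Y)\|_x\leq \frac{1}{b_t}\sum_{y\in Y}\|H(x,y)\|_x\leq A$.
\item[(3)] Independence of $\{Y_t\}$, and the fact that $Y_t$ has distribution $\mu_{b_t}$, are hypotheses in Assumption \ref{assumption-batch-no-repetitions}(6).
\end{enumerate}

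The one step needing a small computation is unbiasedness, item (1c). I will swap the order of summation: each $l\in\Omega$ lies in exactly $\bn{N-1}{b_t-1}$ of the $b_t$-subsets. This gives
\[
\E_{\Omega_t}(H_t(x,Y))=\frac{1}{\bn{N}{b_t}}\sum_{Y\in\p_{b_t}(\Omega)}\frac{1}{b_t}\sum_{y\in Y}H(x,y)=\frac{\bn{N-1}{b_t-1}}{b_t\bn{N}{b_t}}\sum_{l=1}^N H(x,l).
\]
Since $\bn{N-1}{b_t-1}/\bn{N}{b_t}=b_t/N$, the right-hand side equals $\frac{1}{N}\sum_{l=1}^N H(x,l)=\nabla F(x)$ by Assumption \ref{assumption-batch-no-repetitions}(3b).

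Assumption \ref{assumption-cost-function} is assumed directly. The remaining hypotheses of the two theorems (learning rates, $x_0\in K$, the iterates staying in $K$, and the conditions on $\alpha,\beta,\ve$) are copied verbatim. So part 1 follows from Theorem \ref{THM-Omega-varying-deterministic-learning-rate} and part 2 from Theorem \ref{THM-Omega-varying-adaptive-learning-rate}. I expect no real obstacle. The only points requiring care are the counting identity and checking that the adaptive denominators coincide exactly.
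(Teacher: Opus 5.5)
Your proposal is correct and follows the paper's proof: the same reduction to Theorems \ref{THM-Omega-varying-deterministic-learning-rate} and \ref{THM-Omega-varying-adaptive-learning-rate} with $\Omega_t=\p_{b_t}(\Omega)$ and $H_t(x,Y)=\frac{1}{b_t}\sum_{y\in Y}H(x,y)$, and the same counting identity $\bn{N-1}{b_t-1}/\bn{N}{b_t}=b_t/N$ for unbiasedness. Your item-by-item check of Assumption \ref{assumption-probability-space-varying} (finite sample spaces, and the bound $A$ via the triangle inequality) spells out what the paper leaves as ``straightforward to verify.''
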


Zhao and Zhang introduced a batch forming scheme called stratified sampling in \cite{ZZ}. Next, we present the applications of Theorems \ref{THM-Omega-varying-deterministic-learning-rate} and \ref{THM-Omega-varying-adaptive-learning-rate} to this scheme.

\begin{assumption}\label{assumption-batch-stratified}
Let $(\Omega,\fil, \mu)$ and $H$ be as in Assumption \ref{assumption-probability-space-fixed}. Further assume the following.
  \begin{enumerate}
  \item For every $t\geq 0$, $\{\hat{\Omega}_j^t\}_{j=1}^{m^t}$ is a partition of $\Omega$ into finitely many events with positive probabilities. That is,
	\begin{enumerate}
		\item $\Omega = \bigcup_{j=1}^{m^t} \hat{\Omega}_j^t$,
		\item $\hat{\Omega}_i^t \cap \hat{\Omega}_j^t = \emptyset$ if $i\neq j$,
		\item $\hat{\Omega}_j^t\in \fil$ and $\mu(\hat{\Omega}_j^t)>0$ for every $j=1,2,\dots,m^t$.
	\end{enumerate}
	 \item  For $t\geq 0$ and $j=1,2,\dots,m^t$, $\hat{\Omega}_j^t$ is equipped with 
	 \begin{enumerate}
		 \item the event space $\fil_j^t := \{U ~\big{|}~ U \in \fil, ~U \subset \hat{\Omega}_j^t\}$,
		 \item the probability measure $\hat{\mu}_j^t$ given by $\hat{\mu}_j^t(U):=\frac{\mu(U)}{\mu(\hat{\Omega}_j^t)}$ for $U \in \fil_j^t$.
	 \end{enumerate}
	\item For every $t\geq 0$, $(b_1^t, b_2^t,\dots,b_{m^t}^t)$ is an element of $\nat^{m^t}$, where $\nat$ is the set of positive integers.
  \item $\{\widetilde{\omega}^t\}_{t=0}^\infty$ is a sequence of independent random variables such that $\widetilde{\omega}^t$ takes value in 
	\begin{equation}\label{eq-def-wide-tilde-Omega}
	\widetilde{\Omega}^t:=(\hat{\Omega}_1^t)^{b_1^t} \times (\hat{\Omega}_2^t)^{b_2^t}\times \cdots\times (\hat{\Omega}_{m^t}^t)^{b_{m^t}^t}
	\end{equation}
	with probability distribution $(\hat{\mu}_1^t)^{\times b_1^t} \times (\hat{\mu}_2^t)^{\times b_2^t} \times \cdots\times (\hat{\mu}_{m^t}^t)^{\times b_{m^t}^t}$.
  \item For every $t\geq 0$ and $\widetilde{\omega} \in \widetilde{\Omega}^t$, write
\[
\widetilde{\omega} = ((\omega_{1,1},\dots,\omega_{1,b_1^t}),(\omega_{2,1},\dots,\omega_{2,b_2^t}),\dots,(\omega_{m^t,1},\dots,\omega_{m^t,b_{m^t}^t})),
\]
where $(\omega_{j,1},\dots,\omega_{j,b_j^t}) \in (\hat{\Omega}_j^t)^{b_j^t}$ for $j=1,\dots,m^t$. Define $\widetilde{H}^t: \M \times \widetilde{\Omega}^t \rightarrow T\M$ by 
\begin{equation}\label{eq-def-wide-tilde-H}
\widetilde{H}^t(x,\widetilde{\omega}) = \sum_{j=1}^{m^t} \frac{\mu(\hat{\Omega}_j^t)}{b_j^t} \sum_{k=1}^{b_j^t} H(x,\omega_{j,k})
\end{equation}
for $x \in \M$ and $\widetilde{\omega} \in \widetilde{\Omega}^t$. 
\end{enumerate}
\end{assumption}

\begin{corollary}\label{cor-batch-stratified}
Suppose that Assumptions \ref{assumption-cost-function}, \ref{assumption-probability-space-fixed} and \ref{assumption-batch-stratified} are true. We have the following conclusions.
\begin{enumerate}[1.]
	\item Further assume that
\begin{enumerate}
	\item $\{\g_t\}_{t=0}^\infty$ is a sequence of positive learning rates satisfying condition \eqref{standard-condition-learning-rates} and that $\g_t\leq 1$ for $t\geq 0$,
	\item $x_0$ is a fixed point in $K$ and $\{x_t\}_{t=0}^\infty \subset \M$ is the sequence of iterates generated by
	\begin{equation}\label{eq-SGD-update-batch-stratified-deterministic-learning-rate}
	x_{t+1} = R_{x_t}\left(-\g_t \widetilde{H}^t(x_t,\widetilde{\omega}^t)\right) \text{ for } t \geq 0, 
	\end{equation}
	\item $\{x_t\}_{t=0}^\infty \subset K$.
  \end{enumerate}
  Then  $F(x_t)$ converges almost surely and $\|\nabla F(x_t)\|_{x_t} \rightarrow 0$ both almost surely and in mean square.
	\item Further assume that
\begin{enumerate}
	\item $x_0$ is a fixed point in $K$. $\alpha,~\beta$ and $\ve$ are fixed positive numbers satisfying  $0 < \ve \leq \frac{1}{2}$ and $ 0<\alpha \leq \beta^{\frac{1}{2}+\ve}$,
	\item the sequences of iterates $\{x_t\}_{t=0}^\infty \subset \M$ and adaptive learning rates $\{\eta_t\}_{t=0}^\infty$ are  generated by
	\begin{equation}\label{eq-SGD-update-batch-stratified-adaptive-learning-rate}
	\begin{cases}
	x_{t+1} = R_{x_t}\left(-\eta_t \widetilde{H}^t(x_t,\widetilde{\omega}^t)\right) \\
	\eta_t = \frac{\alpha}{(\beta + \sum_{p=0}^{t-1}\| \widetilde{H}^p(x_p,\widetilde{\omega}^p)\|_{x_p}^2)^{\frac{1}{2}+\ve}}
	\end{cases}\text{ for } t \geq 0, 
	\end{equation}
	\item $\{x_t\}_{t=0}^\infty \subset K$.
  \end{enumerate}
  Then  $F(x_t)$ converges almost surely and $\|\nabla F(x_t)\|_{x_t} \rightarrow 0$ almost surely.
\end{enumerate}
\end{corollary}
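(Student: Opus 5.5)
We will derive Corollary \ref{cor-batch-stratified} directly from Theorems \ref{THM-Omega-varying-deterministic-learning-rate} and \ref{THM-Omega-varying-adaptive-learning-rate}. To do so we take, for each $t\geq 0$, the probability space $(\Omega_t,\fil_t,\mu_t)$ to be $\widetilde{\Omega}^t$. It carries the product $\sigma$-algebra $\fil_1^t{}^{\otimes b_1^t}\otimes\cdots\otimes\fil_{m^t}^t{}^{\otimes b_{m^t}^t}$ and the product measure $(\hat{\mu}_1^t)^{\times b_1^t}\times\cdots\times(\hat{\mu}_{m^t}^t)^{\times b_{m^t}^t}$. We set $H_t:=\widetilde{H}^t$ and let $\omega_t:=\widetilde{\omega}^t$. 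With these choices, updates \eqref{eq-SGD-update-batch-stratified-deterministic-learning-rate} and \eqref{eq-SGD-update-batch-stratified-adaptive-learning-rate} are literally \eqref{eq-SGD-update-Omega-varying-deterministic-learning-rate} and \eqref{eq-SGD-update-Omega-varying-adaptive-learning-rate}. Independence of the $\omega_t$ and the correct distributions are given by Assumption \ref{assumption-batch-stratified}(4). So it remains only to verify Assumption \ref{assumption-probability-space-varying}(1)(a),(b),(c) and (2).

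The first two checks are routine. For (1)(a), each $\widetilde{H}^t(\bullet,\widetilde{\omega})$ is a finite linear combination with constant coefficients of the vector fields $H(\bullet,\omega_{j,k})$, hence a tangent vector field. For (2), fix $x\in K$. The triangle inequality and Assumption \ref{assumption-probability-space-fixed}(4) give
\[
\|\widetilde{H}^t(x,\widetilde{\omega})\|_x\leq \sum_{j}\frac{\mu(\hat{\Omega}_j^t)}{b_j^t}\, b_j^t A=A\sum_j\mu(\hat{\Omega}_j^t)=A .
\]
This uses that the $\hat{\Omega}_j^t$ partition $\Omega$.

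For unbiasedness, (1)(c), we integrate term by term against the product measure. Each summand $H(x,\omega_{j,k})$ depends on one coordinate only, so its integral equals
\[
\int_{\hat{\Omega}_j^t}H(x,\omega)\,d\hat{\mu}_j^t=\frac{1}{\mu(\hat{\Omega}_j^t)}\int_{\hat{\Omega}_j^t}H(x,\omega)\,d\mu .
\]
Summing, the $b_j^t$ copies cancel the factor $1/b_j^t$, and $\mu(\hat{\Omega}_j^t)$ cancels as well. We are left with $\sum_j\int_{\hat{\Omega}_j^t}H\,d\mu=\int_\Omega H\,d\mu=\nabla F(x)$, using disjointness and Assumption \ref{assumption-probability-space-fixed}(2). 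Integrability is not an issue on $K$ by the bound above. Away from $K$ it is implicitly part of the hypothesis that $\E(H(x,\omega))$ exists, and the same computation applies.

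The only point needing care is measurability, (1)(b). If $\Omega$ is finite with the discrete $\sigma$-algebra, then each $\hat{\Omega}_j^t$ is finite with all subsets as events, so $\widetilde{\Omega}^t$ is finite with its power set, and alternative (i) holds. Otherwise we are in alternative (ii), where $H$ is $\B_\M\otimes\fil$-measurable, and we must show $\widetilde{H}^t$ is $\B_\M\otimes\fil_{\widetilde{\Omega}^t}$-measurable. First, the map $(x,\widetilde{\omega})\mapsto(x,\omega_{j,k})$ from $\M\times\widetilde{\Omega}^t$ to $\M\times\Omega$ is measurable: preimages of rectangles $P\times Q$ are rectangles $P\times(\text{cylinder over }Q\cap\hat{\Omega}_j^t)$, and $Q\cap\hat{\Omega}_j^t\in\fil_j^t$. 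Hence each $(x,\widetilde{\omega})\mapsto H(x,\omega_{j,k})$ is measurable into $T\M$. Second, we need that a finite sum of measurable maps into $T\M$ with a common base point, scaled by constants, is measurable. This holds because the map $(x,\mathbf{v}_1,\dots,\mathbf{v}_n)\mapsto\sum c_i\mathbf{v}_i$ on the fiber product $T\M\times_\M\cdots\times_\M T\M$ is continuous. The tuple map into the fiber product is measurable because $T\M$ is second countable, so the Borel $\sigma$-algebra of the product is the product of the Borel $\sigma$-algebras. This is the step I expect to be the main (though mild) obstacle. With all hypotheses verified, conclusions 1 and 2 follow from the two theorems respectively.
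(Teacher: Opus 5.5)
Your proposal is correct and follows the paper's proof. Both specialize Theorems \ref{THM-Omega-varying-deterministic-learning-rate} and \ref{THM-Omega-varying-adaptive-learning-rate} to $\Omega_t=\widetilde{\Omega}^t$, $H_t=\widetilde{H}^t$, $\omega_t=\widetilde{\omega}^t$, and the essential check is the unbiasedness identity $\E_{\widetilde{\Omega}^t}(\widetilde{H}^t(x,\widetilde{\omega}))=\sum_j\mu(\hat{\Omega}_j^t)\,\E_\Omega(H(x,\omega)\mid\omega\in\hat{\Omega}_j^t)=\nabla F(x)$. The paper calls the remaining hypotheses straightforward, whereas you also verify the uniform bound $\|\widetilde{H}^t(x,\widetilde{\omega})\|_x\leq A$ (via $\sum_j\mu(\hat{\Omega}_j^t)=1$) and the measurability of $\widetilde{H}^t$, and both checks are correct.
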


\subsection{Confined stochastic gradient descents} A central assumption of Corollaries \ref{cor-batch-with-repetitions}, \ref{cor-batch-no-repetitions} and \ref{cor-batch-stratified} is that all iterates of the stochastic gradient descents are contained in a compact subset of the manifold. While this assumption can often be empirically observed after running the algorithms, it would be nice if one can theoretically predict such compactness ahead of time. For this purpose, Bottou imposed assumptions \cite[(5.2-3)]{Bottou}, and Bonnabel imposed \cite[Theorem 3, Assumptions 1-3]{Bonnabel}. Rooted in the ideas of Bottou and Bonnabel, the concept of confinements of stochastic gradient descents was introduced in \cite{XYW-2024, YXW-2024}. 

\begin{definition}\cite[Definition 2.5]{XYW-2024} \cite[Definition 2.5]{YXW-2024}\label{def-confinement}
Assume that:
\begin{enumerate}[(i)]
	\item $\M$ is a differentiable manifold with a fixed retraction $R:TM\rightarrow M$,
	\item $\Omega$ is a non-empty set,
	\item $H:\M\times \Omega \rightarrow T\M$ is a function satisfying that $H(\bullet, \omega): \M\rightarrow T\M$ is a tangent vector field on $\M$ for every $\omega \in \Omega$.
\end{enumerate}
Depending on the algorithms implemented, we have the following variants of the notion of confinements:
\begin{enumerate}[1.]
	\item A confinement of $H$ on $\M$ is a $C^2$ function $\rho: \M \rightarrow \R$ satisfying:
\begin{enumerate}
	\item For every $c\in \R$, the set $\{x \in M~\big{|}~ \rho(x)\leq c\}$ is compact.
	\item There exists a $\rho_0\in \R$ such that $\left\langle \nabla \rho (x), H(x,\omega) \right\rangle_x \geq 0$ for all $\omega \in \Omega$ and $x \in \M$ satisfying $\rho(x)\geq \rho_0$.
\end{enumerate}
  \item For a fixed $\kappa>0$, a batch $\kappa$-confinement of $H$ on $\M$ is a $C^2$ function $\rho: \M \rightarrow \R$ satisfying:
\begin{enumerate}
	\item For every $c\in \R$, the set $\{x \in M~\big{|}~ \rho(x)\leq c\}$ is compact.
	\item For every $x \in \M$, denote by $C_x$ the convex hull of $\{H(x,\omega)~\big{|}~\omega \in \Omega\}$ in $T_x \M$. There exist $\rho_0,\rho_1\in \R$ such that $\rho_0<\rho_1$ and, for all $s\in [0,\kappa]$, $x \in \M$ and $\mathbf{v} \in C_x$, 
	\begin{itemize}
		\item if $\rho(x)\leq \rho_0$, then
		\begin{equation}\label{eq-def-b-kappa-confinement-1}
    \rho\left(R_{x}\left(-s\mathbf{v}\right)\right)\leq \rho_1,
	  \end{equation}
	\item if $\rho_0\leq \rho(x)\leq \rho_1$, then
		\begin{equation}\label{eq-def-b-kappa-confinement-2}
		\left\langle \nabla \rho (x),\mathbf{v} \right\rangle_x  \geq  \max\left\{0, \frac{\kappa}{2}\Hess (\rho \circ R_x)|_{-s\mathbf{v}}\left(\mathbf{v},\mathbf{v}\right)\right\},
	  \end{equation}
		where $\Hess(\rho\circ R_x)$ is the Hessian of the function $\rho\circ R_x:T_x \M \rightarrow \R$, which is defined on the inner product space $T_x \M$.
	\end{itemize}
  \end{enumerate}
  If, in part 2(b), we only assume that inequalities \eqref{eq-def-b-kappa-confinement-1} and \eqref{eq-def-b-kappa-confinement-2} are true for $\mathbf{v} \in \{H(x,\omega)~\big{|}~\omega \in \Omega\}$, then $\rho$ is called a $\kappa$-confinement for $H$.
\end{enumerate}
\end{definition}

The assumptions \cite[(5.3)]{Bottou} and \cite[Theorem 3, Assumptions 1]{Bonnabel} are basically saying that the square of the distance function is a confinement of the gradient of the cost function. Note that assuming that the random gradient $H$ of the cost function has a confinement is more restricting than the assumption that the gradient of the cost function has a confinement. However, the benefit is that, by assuming this, we do not need the additional assumptions \cite[(5.2)]{Bottou} or \cite[Theorem 3, Assumptions 2-3]{Bonnabel}, which appear to be even more restricting. Moreover, in many regularized problems, the regularizing terms are in fact confinements of the random gradients of the regularized cost functions. For example, in the regularized low-rank approximation problems studied in \cite{XYW-2024,YXW-2024}, the regularizing terms are both confinements and $\kappa$-confinements for some $\kappa>0$ of the random gradients of the regularized cost functions. In fact, a slightly more careful analysis \cite{Yang} shows that they are also batch $\kappa$-confinements. In \cite{XW-2026}, there is also an unregularized cost function that comes with a confinement and leads to a stochastic gradient descent algorithm for linear classifications that is fundamentally different from the support vector machine. 

The following proposition shows that the stochastic gradient descent happens inside a compact set if the random gradient has a confinement. 

\begin{proposition}\label{prop-confine-compact}
Assume that
\begin{enumerate}[(i)]
	\item $\M$ is a differentiable manifold with a fixed retraction $R:TM\rightarrow M$,
	\item $\Omega$ is a non-empty set,
	\item $H:\M\times \Omega \rightarrow T\M$ is a function satisfying that 
	\begin{enumerate}
		\item $H(\bullet, \omega): \M\rightarrow T\M$ is a tangent vector field on $\M$ for every $\omega \in \Omega$,
		\item $H$ is locally bounded, that is, for any compact subset $K$ of $\M$, there is an $r>0$ depending on $K$ such that $\|H(x,\omega)\|_x\leq r$ for all $(x,\omega) \in K \times \Omega$,
	\end{enumerate}
	\item $C_x$ is the convex hull of $\{H(x,\omega)~\big{|}~\omega \in \Omega\}$ in $T_x \M$.
\end{enumerate}
Then we have the following conclusions.

\begin{enumerate}[1.]
	\item Further assume that 
	\begin{enumerate}
	  \item there is a confinement $\rho$ of $H$, and $\rho_0\in \R$ satisfies that $\left\langle \nabla \rho (x), H(x,\omega) \right\rangle_x \geq 0$ for all $\omega \in \Omega$ and $x \in \M$ satisfying $\rho(x)\geq \rho_0$,
		\item $\{\g_t\}_{t=0}^\infty$ satisfies \eqref{standard-condition-learning-rates},
		\item $\lambda,b,\Theta$ are positive constants, $c= \max \{\g_t~\big{|}~t\geq 0\}$, $\sigma=\sum_{t=0}^\infty \g_t^2$ and $\rho_1 =\rho_0+ \lambda c + \frac{b^2\sigma}{2}$,
		\item $\vf$ is a positive constant satisfying 
	\begin{equation}\label{eq-def-vf}
	\vf\geq \max\{\Lambda,B,\frac{c}{\Theta}\},
	\end{equation}
	where
	\begin{eqnarray*}
	&& \Lambda:= \frac{1}{\lambda}\sup\left\{\max\{0, ~-\left\langle  \nabla\rho(x),\mathbf{v}\right\rangle_{x}\}~\big{|}~\rho(x)\leq \rho_0,~\mathbf{v}\in C_x\right\}, \\
	&& B:= \frac{1}{b} \sup\left\{\sqrt{\max\{0, ~\Hess(\rho\circ R_{x})|_{-\theta\mathbf{v}}(\mathbf{v},\mathbf{v})\}}~\big{|}~0\leq \theta \leq \Theta,~\rho(x)\leq \rho_1,~\mathbf{v}\in C_x\right\},
	\end{eqnarray*}
		\item the sequences $\{x_t\}_{t=0}^\infty\subset \M$ and $\{\mathbf{u}_t \in C_{x_t}\}_{t=0}^\infty$ satisfy that $\rho(x_0) \leq \rho_0$ and
		\begin{equation}\label{eq-def-x-t-deterministic}
x_{t+1} = R_{x_t}\left(-\frac{\g_t}{\vf}\mathbf{u}_t\right) \text{ for } t\geq 0.
\end{equation}
	\end{enumerate}
	Then $\{x_t\}_{t=0}^\infty$ is contained in the compact set $\{x\in \M ~\big{|}~ \rho(x)\leq \rho_1\}$.
	\item Further assume that
	\begin{enumerate}
		\item there is a batch $\kappa$-confinement $\rho$ of $H$ for some $\kappa>0$, 
		\item $\rho_0<\rho_1$ satisfy \eqref{eq-def-b-kappa-confinement-1} and \eqref{eq-def-b-kappa-confinement-2}, 
		\item $\{\eta_t\}_{t=0}^\infty$ satisfies that $0<\eta_t\leq \kappa$ for $t\geq 0$,
		\item the sequences $\{x_t\}_{t=0}^\infty\subset \M$ and $\{\mathbf{u}_t \in C_{x_t}\}_{t=0}^\infty$ satisfy that $\rho(x_0) \leq \rho_1$ and
		\begin{equation}\label{eq-def-x-t-adaptive}
x_{t+1} = R_{x_t}\left(-\eta_t\mathbf{u}_t\right) \text{ for } t\geq 0.
\end{equation}
	\end{enumerate}
	Then $\{x_t\}_{t=0}^\infty$ is contained in the compact set $\{x\in \M ~\big{|}~ \rho(x)\leq \rho_1\}$.
\end{enumerate}
\end{proposition}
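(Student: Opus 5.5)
Both parts rest on a second-order Taylor expansion of $\rho$ along the retraction curve. For a step $x_{t+1}=R_{x_t}(-s\mathbf{v})$, define $g(\tau)=\rho(R_{x_t}(-\tau\mathbf{v}))$ for $\tau\in[0,s]$. Since $R_x(\mathbf{0}_x)=x$ and $dR_x(\mathbf{0}_x)=\id$, we have $g(0)=\rho(x_t)$ and $g'(0)=-\langle\nabla\rho(x_t),\mathbf{v}\rangle_{x_t}$. Moreover $g''(\tau)=\Hess(\rho\circ R_{x_t})|_{-\tau\mathbf{v}}(\mathbf{v},\mathbf{v})$. Taylor's theorem with Lagrange remainder then gives
\[
\rho(x_{t+1})=\rho(x_t)-s\langle\nabla\rho(x_t),\mathbf{v}\rangle_{x_t}+\frac{s^2}{2}\Hess(\rho\circ R_{x_t})|_{-\theta\mathbf{v}}(\mathbf{v},\mathbf{v})
\]
for some $\theta\in[0,s]$. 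Throughout, I use that $C_x$ is convex, so the confinement inequality $\langle\nabla\rho,H\rangle\ge0$ extends by linearity to every $\mathbf{v}\in C_x$.

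\textbf{Part 2.} I would argue by induction that $\rho(x_t)\le\rho_1$ for all $t$, with $s=\eta_t\le\kappa$ and $\mathbf{v}=\mathbf{u}_t\in C_{x_t}$. If $\rho(x_t)\le\rho_0$, then \eqref{eq-def-b-kappa-confinement-1} gives $\rho(x_{t+1})\le\rho_1$ directly. If $\rho_0\le\rho(x_t)\le\rho_1$, I plug \eqref{eq-def-b-kappa-confinement-2} (valid at every $\theta\in[0,\kappa]$) into the expansion:
\[
\rho(x_{t+1})\le\rho(x_t)-s\langle\nabla\rho,\mathbf{v}\rangle+\frac{s^2}{\kappa}\langle\nabla\rho,\mathbf{v}\rangle=\rho(x_t)-s\Bigl(1-\frac{s}{\kappa}\Bigr)\langle\nabla\rho,\mathbf{v}\rangle\le\rho(x_t)\le\rho_1 .
\]
This closes the induction.

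\textbf{Part 1.} Here $s=\g_t/\vf$, and $s\le c/\vf\le\Theta$ because $\vf\ge c/\Theta$. The plan is to control excursions of $\rho(x_t)$ above $\rho_0$.

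1. Suppose $\rho(x_t)>\rho_1$ for some first index $t$. Let $t_0<t$ be the last index with $\rho(x_{t_0})\le\rho_0$; it exists because $\rho(x_0)\le\rho_0$.
2. On the step $t_0\to t_0+1$, the expansion gives a first-order contribution of at most $\frac{\g_{t_0}}{\vf}\lambda\Lambda\le\lambda\g_{t_0}\le\lambda c$, using $\vf\ge\Lambda$. The second-order term is at most $\frac{\g_{t_0}^2}{2\vf^2}(bB)^2\le\frac{b^2\g_{t_0}^2}{2}$, using $\vf\ge B$. That bound on the Hessian term requires the intermediate points of the step to lie in $\{\rho\le\rho_1\}$.
3. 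For each step $k\to k+1$ with $t_0<k<t$, we have $\rho_0<\rho(x_k)\le\rho_1$. The first-order term is $\le0$ by the confinement property, and the second-order term is at most $\frac{b^2\g_k^2}{2}$ by the same Hessian bound.
4. Summing steps 2 and 3 gives $\rho(x_t)\le\rho_0+\lambda c+\frac{b^2}{2}\sum\g_k^2\le\rho_1$, a contradiction.

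\textbf{Main obstacle.} The difficulty is in Part 1: $B$ bounds the Hessian only at base points with $\rho(x)\le\rho_1$, and only for $\theta\le\Theta$. Base points are fine, since the induction hypothesis keeps $\rho(x_k)\le\rho_1$ for $k<t$. The Lagrange point $\theta$ lies in $[0,s]\subset[0,\Theta]$, which is exactly why the condition $\vf\ge c/\Theta$ is imposed. So the Hessian bound is legitimate, and the whole argument reduces to the bookkeeping above together with compactness of the sublevel set $\{\rho\le\rho_1\}$, which holds by the definition of a confinement.
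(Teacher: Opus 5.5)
Your proof is correct and is essentially the paper's argument: the same Lagrange-remainder expansion of $\rho$ along the retraction curve, the same two per-step estimates, and an identical Part 2. The per-step estimates use $\vf\ge\Lambda$ when $\rho(x_t)\le\rho_0$; when $\rho_0<\rho(x_t)\le\rho_1$ they use the confinement sign condition, extended to $C_{x_t}$ by convexity. For the Hessian term, $\vf\ge B$ and $\vf\ge c/\Theta$ are applied at a base point with $\rho\le\rho_1$.

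The only difference is bookkeeping in Part 1. You telescope over the excursion between the last visit to $\{\rho\le\rho_0\}$ and a hypothetical first exit from $\{\rho\le\rho_1\}$. The paper instead inducts on the invariant $\rho(x_t)+\frac{b^2}{2}\sum_{j\ge t}\g_j^2\le\rho_1$, and the two give the same bound. As your last paragraph notes, $B$ constrains only the base point, so the remark in your step 2 about intermediate points is unnecessary.
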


Combining Proposition \ref{prop-confine-compact} with Corollaries \ref{cor-batch-with-repetitions}, \ref{cor-batch-no-repetitions} and \ref{cor-batch-stratified}, one can establish the convergence of some stochastic gradient descents without explicitly assuming that all iterates are contained in a compact subset of $\M$. 

\section{Proofs}\label{app-proofs}

In this section we prove the results in Section \ref{sec-THM}. Although these proofs are fairly close to the standard bounded variance argument, we still provide most of their details for the benefits of the readers who are starting in this field. 

\subsection{The proof of Theorem \ref{THM-Omega-varying-deterministic-learning-rate}}  

The proof of Theorem \ref{THM-Omega-varying-deterministic-learning-rate} is based on that of \cite[Theorem 2.6]{XYW-2024}, which, in turn, is a close adaptation of the proof of \cite[Proposition 4]{Bertsekas-Tsitsiklis-gradient}. Unless otherwise specified, all notations in this subsection are from Theorem \ref{THM-Omega-varying-deterministic-learning-rate}.

\begin{lemma}\label{lemma-F-K-A-Lipschitz}
\begin{equation}\label{eq-nabla-F-R-adj}
\nabla (F\circ R_x)(\mathbf{v})=\mathrm{adj}(dR_x|_{\mathbf{v}})((\nabla F)(R_x(\mathbf{v}))).
\end{equation}  
Consequently, there exists a $C_1>0$ such that 
\begin{eqnarray}
\label{eq-F-K-A-Lipschitz-1} && \|\nabla (F\circ R_x)(\mathbf{v}) - \nabla F(x)\|_x \leq C_1\|\mathbf{v}\|_x, \\ 
\label{eq-F-K-A-Lipschitz-2} && F(R_x(\mathbf{v})) \leq F(x) + \left\langle \nabla F(x), \mathbf{v} \right\rangle_x + \frac{C_1}{2}\|\mathbf{v}\|_x^2
\end{eqnarray}
for every $x\in K$ and every $\mathbf{v}\in T_x M$ satisfying $\|\mathbf{v}\|_x\leq A$.
\end{lemma}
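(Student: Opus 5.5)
The plan has three steps: verify the gradient identity \eqref{eq-nabla-F-R-adj} by the chain rule, read off \eqref{eq-F-K-A-Lipschitz-1} from Assumption \ref{assumption-cost-function}(5), and integrate along a segment to get \eqref{eq-F-K-A-Lipschitz-2}.

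\textbf{Step 1: the identity.} Fix $x\in K$. The function $F\circ R_x:T_x\M\rightarrow\R$ lives on the inner product space $T_x\M$, and is $C^1$ because $F$ is $C^1$ and $R$ is $C^2$. For $\mathbf{v},\mathbf{w}\in T_x\M$ the chain rule gives
\[
d(F\circ R_x)|_{\mathbf{v}}(\mathbf{w}) = dF|_{R_x(\mathbf{v})}\left(dR_x|_{\mathbf{v}}(\mathbf{w})\right) = \left\langle (\nabla F)(R_x(\mathbf{v})), dR_x|_{\mathbf{v}}(\mathbf{w})\right\rangle_{R_x(\mathbf{v})}.
\]
By the definition of the adjoint, the right-hand side equals $\left\langle \mathbf{w}, \adj(dR_x|_{\mathbf{v}})((\nabla F)(R_x(\mathbf{v})))\right\rangle_x$. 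Since the gradient on $T_x\M$ is taken with respect to $\left\langle\bullet,\bullet\right\rangle_x$, this proves \eqref{eq-nabla-F-R-adj}.

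\textbf{Step 2: the Lipschitz bound.} By Assumption \ref{assumption-cost-function}(5), $\nabla F$ is $R$-Lipschitz on $K$ up to radius $A$ with some constant $C$. Substituting \eqref{eq-nabla-F-R-adj} into \eqref{eq-def-Lipschitz} gives exactly \eqref{eq-F-K-A-Lipschitz-1}, with $C_1:=C$, for all $x\in K$ and $\|\mathbf{v}\|_x\leq A$.

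\textbf{Step 3: the descent inequality.} Let $g(s)=F(R_x(s\mathbf{v}))$ for $s\in[0,1]$. Then $g$ is $C^1$ with
\[
g'(s)=\left\langle \nabla(F\circ R_x)(s\mathbf{v}),\mathbf{v}\right\rangle_x .
\]
Since $R_x(\mathbf{0}_x)=x$, we have $g(0)=F(x)$, and by \eqref{eq-nabla-F-R-adj} together with $dR_x(\mathbf{0}_x)=\id$ we have $\nabla(F\circ R_x)(\mathbf{0}_x)=\nabla F(x)$. The fundamental theorem of calculus then gives
\[
F(R_x(\mathbf{v})) - F(x) - \left\langle\nabla F(x),\mathbf{v}\right\rangle_x = \int_0^1 \left\langle \nabla(F\circ R_x)(s\mathbf{v})-\nabla F(x),\mathbf{v}\right\rangle_x ds .
\]
Because $\|s\mathbf{v}\|_x\leq A$ for $s\in[0,1]$, Cauchy–Schwarz and \eqref{eq-F-K-A-Lipschitz-1} bound the integrand by $C_1 s\|\mathbf{v}\|_x^2$. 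Integrating gives $\frac{C_1}{2}\|\mathbf{v}\|_x^2$, which is \eqref{eq-F-K-A-Lipschitz-2}.

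No step is a real obstacle. The one point needing care is in Step 1: the gradient of $F\circ R_x$ must be taken with respect to $\left\langle\bullet,\bullet\right\rangle_x$ under the standard identification $T_{\mathbf{v}}(T_x\M)\cong T_x\M$, so that the adjoint appears with exactly the convention fixed before Definition \ref{def-Lipschitz}. Everything else is direct substitution.
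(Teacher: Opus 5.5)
Your proposal is correct and follows essentially the same route as the paper: the chain rule together with the definition of the adjoint gives \eqref{eq-nabla-F-R-adj}, substituting it into the $R$-Lipschitz condition gives \eqref{eq-F-K-A-Lipschitz-1} with $C_1$ equal to the Lipschitz constant, and integrating $s\mapsto F(R_x(s\mathbf{v}))$ over $[0,1]$ with the integrand bounded by $C_1 s\|\mathbf{v}\|_x^2$ gives \eqref{eq-F-K-A-Lipschitz-2}. Your side remark that $\nabla(F\circ R_x)(\mathbf{0}_x)=\nabla F(x)$ is true but not needed, since the integral identity only uses $R_x(\mathbf{0}_x)=x$ and adding and subtracting $\nabla F(x)$.
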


\begin{proof}
For any $x\in \M$ and $\mathbf{u},~\mathbf{v}\in T_x \M$, we have that 
\begin{eqnarray*}
&&\left\langle \nabla (F\circ R_x)(\mathbf{v}), \mathbf{u}\right\rangle_x = d(F\circ R_x)|_{\mathbf{v}}(\mathbf{u}) = ((dF|_{R_x(\mathbf{v})})\circ (dR_x|_{\mathbf{v}}))(\mathbf{u})  \\
& = & \left\langle (\nabla F)(R_x(\mathbf{v})), (dR_x|_{\mathbf{v}})(\mathbf{u})\right\rangle_{R_x(\mathbf{v})} = \left\langle \mathrm{adj}(dR_x|_{\mathbf{v}})((\nabla F)(R_x(\mathbf{v}))), \mathbf{u}\right\rangle_{x}.
\end{eqnarray*}
This proves \eqref{eq-nabla-F-R-adj}.

Since $\nabla F$ is $R$-Lipschitz on $K$ up to radius $A$, we can fix a $C_1>0$ such that
\[
\|\adj(dR_x|_{\mathbf{v}})(\nabla F(R_x(\mathbf{v}))) - \nabla F(x)\|_x \leq C_1\|\mathbf{v}\|_x
\]
for every $x \in K$ and every $\mathbf{v} \in T_x \M$ satisfying $\|\mathbf{v}\|_x\leq A$. So 
\[
\|\nabla (F\circ R_x)(\mathbf{v}) - \nabla F(x)\|_x = \|\adj(dR_x|_{\mathbf{v}})((\nabla F)(R_x(\mathbf{v}))) - \nabla F(x))\|_x  \leq C_1 \|\mathbf{v}\|_x.
\]
This proves inequality \eqref{eq-F-K-A-Lipschitz-1}. Let $p(s)=F(R_x(s\mathbf{v}))$ for $s\in [0,1]$. Then $p'(s)=\left\langle  \nabla (F\circ R_x)(s\mathbf{v}), \mathbf{v}\right\rangle_x$ and
\begin{eqnarray*}
&& F(R_x(\mathbf{v})) - F(x) = p(1) -p(0) = \int_0^1 p'(s) ds = \int_0^1 \left\langle  \nabla (F\circ R_x)(s\mathbf{v}), \mathbf{v}\right\rangle_x ds \\
& = & \int_0^1 \left\langle  \nabla F(x)+\nabla (F\circ R_x)(s\mathbf{v})-\nabla F(x), \mathbf{v}\right\rangle_x ds 
=  \left\langle  \nabla F(x), \mathbf{v}\right\rangle_x + \int_0^1 \left\langle  \nabla (F\circ R_x)(s\mathbf{v})-\nabla F(x), \mathbf{v}\right\rangle_x ds \\
& \leq & \left\langle  \nabla F(x), \mathbf{v}\right\rangle_x + \int_0^1 \|\nabla (F\circ R_x)(s\mathbf{v})-\nabla F(x)\|_x \|\mathbf{v}\|_x ds  \leq  \left\langle  \nabla F(x), \mathbf{v}\right\rangle_x + \int_0^1 C_1s \|\mathbf{v}\|_x^2 ds \\
& = & \left\langle  \nabla F(x), \mathbf{v}\right\rangle_x + \frac{C_1}{2}\|\mathbf{v}\|_x^2.
\end{eqnarray*}
This proves inequality \eqref{eq-F-K-A-Lipschitz-2}.
\end{proof}

\begin{lemma}\label{lemma-measurability-deterministic}
For every $t\geq 0$, 
\begin{enumerate}
	\item the iterate $x_t=x_t(\omega_0,\dots,\omega_{t-1})$ when viewed as a function $x_t:(\prod_{\tau =0}^{t-1} \Omega_\tau, \bigotimes_{\tau =0}^{t-1} \fil_\tau) \rightarrow (\M, \B_{\M})$ is measurable, where $\bigotimes_{\tau =0}^{t-1} \fil_t$ is the product $\sigma$-algebra generated by $\{\prod_{\tau =0}^{t-1} F_\tau ~\big{|}~ F_\tau \in\fil_\tau \text{ for } \tau=0,1,\dots, t-1\}$,
	\item the random gradient $H_t(x_t,\omega_t)$ when viewed as a function $(\prod_{\tau =0}^{t} \Omega_\tau, \bigotimes_{\tau =0}^{t} \fil_\tau) \rightarrow (T\M, \B_{T\M})$ is measurable.
\end{enumerate}
\end{lemma}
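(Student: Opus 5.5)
We argue by induction on $t$, proving both statements simultaneously. The two cases of Assumption \ref{assumption-probability-space-varying}(1)(b) are handled separately only at the point where measurability of $H_t$ is used.

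\emph{Base case.} For $t=0$, $x_0$ is a fixed point of $K$, so it is a constant map from the trivial product (a one-point space) and is measurable. For part (2) at $t=0$, $H_0(x_0,\omega_0)$ is the map $\omega_0\mapsto H_0(x_0,\omega_0)$. In the finite case every subset of $\Omega_0$ is measurable, so any function is measurable. In the general case it is the composition of $\omega_0\mapsto (x_0,\omega_0)$ with $H_0$. The first map is measurable from $(\Omega_0,\fil_0)$ to $(\M\times\Omega_0,\B_\M\otimes\fil_0)$, because preimages of generating rectangles $P\times Q$ are either $Q$ or $\emptyset$. The second map is measurable by assumption.

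\emph{Part (2) at stage $t$.} Suppose $x_t$ is measurable as a function of $(\omega_0,\dots,\omega_{t-1})$. Consider
\[
\Phi_t(\omega_0,\dots,\omega_t)=(x_t(\omega_0,\dots,\omega_{t-1}),\omega_t),
\]
viewed as a map into $(\M\times\Omega_t,\B_\M\otimes\fil_t)$. For a rectangle $P\times Q$,
\[
\Phi_t^{-1}(P\times Q)=x_t^{-1}(P)\times Q,
\]
which lies in $\bigotimes_{\tau=0}^t\fil_\tau$. Since rectangles generate the product $\sigma$-algebra, $\Phi_t$ is measurable.
\begin{itemize}
\item In the general case, $H_t(x_t,\omega_t)=H_t\circ\Phi_t$ is a composition of measurable maps, hence measurable.
\item In the finite case, all $\Omega_\tau$ are finite and all $\fil_\tau$ are power sets. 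The product $\sigma$-algebra on the finite product is then again the full power set, since singletons are rectangles. So every function out of $\prod_{\tau=0}^t\Omega_\tau$ is measurable, and both (1) and (2) hold trivially in this case.
\end{itemize}

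\emph{Part (1) at stage $t+1$.} By \eqref{eq-SGD-update-Omega-varying-deterministic-learning-rate},
\[
x_{t+1}=R\bigl(-\g_t H_t(x_t,\omega_t)\bigr).
\]
Here $H_t(x_t,\omega_t)$ is a measurable map into $(T\M,\B_{T\M})$ by part (2). Scalar multiplication by $-\g_t$ is continuous on $T\M$, hence Borel. The retraction $R:T\M\to\M$ is $C^2$, hence continuous, hence Borel measurable. So $x_{t+1}$ is measurable with respect to $\bigotimes_{\tau=0}^t\fil_\tau$, which completes the induction.

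\emph{Main obstacle.} The only delicate point is the measurability of $\Phi_t$ into the product $\sigma$-algebra, i.e., that "plugging a measurable random point into a jointly measurable field" stays measurable. The rectangle-generator argument above settles it, and no separability issues arise because we only need $\B_\M\otimes\fil_t$ rather than a Borel $\sigma$-algebra on the product.
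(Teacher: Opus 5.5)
Your proof is correct and follows essentially the same route as the paper. The finite case is trivial because the product $\sigma$-algebra is the full power set. In the measurable case you run an induction that uses continuity of $R$, measurability of $(x_t,\omega_t)$ into $\B_{\M}\otimes\fil_t$, and composition with $H_t$; you simply spell out details the paper leaves implicit, namely the rectangle-generator check for $\Phi_t$, why the finite product carries the power set, and continuity of scalar multiplication by $-\g_t$ (whose sign the paper's displayed formula drops).
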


\begin{proof}
According to Assumption \ref{assumption-probability-space-varying}, Lemma \ref{lemma-measurability-deterministic} needs to be proved in two cases.

Case (i) For every $t\geq 0$, the sample space $\Omega_t$ is a finite set, and the event space $\fil_t$ consists of all the subsets of $\Omega_t$. The lemma is trivially true in this case. 

Case (ii) For every $t\geq 0$, $H_t:(\M \times \Omega_t, \B_{\M}\otimes \fil_t) \rightarrow (T\M, \B_{T\M})$ is measurable. We prove the lemma by an induction on $t$ in this case. When $t=0$, $x_0$ is a constant and therefore measurable. Furthermore, $H_0(x_0,\omega_0)$ is measurable since $H_0$ is measurable. Now assume that the lemma is true for some $t-1\geq 0$. Then $x_{t}=R(\g_{t-1}H_{t-1}(x_{t-1},\omega_{t-1})): (\prod_{\tau =0}^{t-1} \Omega_\tau, \bigotimes_{\tau =0}^{t-1} \fil_\tau) \rightarrow (\M, \B_{\M})$ is measurable since $R:(T\M,\B_{T\M}) \rightarrow (\M,\B_{\M})$ is continuous and therefore measurable. So the function $(x_t,\omega_t): (\prod_{\tau =0}^{t} \Omega_\tau, \bigotimes_{\tau =0}^{t} \fil_\tau) \rightarrow (\M\times \Omega_t, \B_{\M}\otimes \fil_t)$ is measurable. Since $H_t$ is measurable, the composition $H_t(x_t,\omega_t)$ is a measurable function $(\prod_{\tau =0}^{t} \Omega_\tau, \bigotimes_{\tau =0}^{t} \fil_\tau) \rightarrow (T\M, \B_{T\M})$. This completes the induction and proves the lemma in Case (ii).
\end{proof}

\begin{lemma}\label{lemma-expectation-converges}
$\sum_{t=0}^\infty \g_t \E(\|\nabla F(x_t)\|_{x_t}^2)$ converges and, consequently, $\sum_{t=0}^\infty \g_t \|\nabla F(x_t)\|_{x_t}^2$ converges almost surely.
\end{lemma}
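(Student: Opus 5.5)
The plan is the standard descent-lemma plus telescoping argument. Apply inequality \eqref{eq-F-K-A-Lipschitz-2} of Lemma \ref{lemma-F-K-A-Lipschitz} with $x=x_t\in K$ and $\mathbf{v}=-\g_t H_t(x_t,\omega_t)$. This is legitimate because $\g_t\leq 1$ and $\|H_t(x_t,\omega_t)\|_{x_t}\leq A$, so $\|\mathbf{v}\|_{x_t}\leq A$. We obtain
\[
F(x_{t+1}) \leq F(x_t) - \g_t\left\langle \nabla F(x_t), H_t(x_t,\omega_t)\right\rangle_{x_t} + \frac{C_1A^2}{2}\g_t^2 .
\]

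Next we take expectations. By Lemma \ref{lemma-measurability-deterministic}, $x_t$ depends measurably on $(\omega_0,\dots,\omega_{t-1})$ only, and $\omega_t$ is independent of these. Fubini's theorem on the product space $\prod_{\tau=0}^{t}\Omega_\tau$ therefore lets us integrate over $\omega_t$ first with $x_t$ frozen. Assumption \ref{assumption-probability-space-varying} gives $\int_{\Omega_t} H_t(x_t,\omega)\,d\mu_t=\nabla F(x_t)$. All integrands are bounded, since $F$ and $\nabla F$ are continuous on the compact $K$ and $H_t$ is bounded by $A$, so every expectation is finite. This yields
\[
\E(F(x_{t+1})) \leq \E(F(x_t)) - \g_t\E(\|\nabla F(x_t)\|_{x_t}^2) + \frac{C_1A^2}{2}\g_t^2 .
\]

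Summing from $t=0$ to $T$ gives
\[
\sum_{t=0}^T \g_t\E(\|\nabla F(x_t)\|^2)\leq F(x_0)-\E(F(x_{T+1}))+\frac{C_1A^2}{2}\sum_t\g_t^2 .
\]
Since $x_{T+1}\in K$, we have $\E(F(x_{T+1}))\geq \min_K F$. The right side is therefore bounded uniformly in $T$ because $\sum\g_t^2<\infty$. The partial sums have nonnegative terms, so the series converges.

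For the almost sure statement, apply monotone convergence: $\E(\sum_t \g_t\|\nabla F(x_t)\|^2)=\sum_t\g_t\E(\|\nabla F(x_t)\|^2)<\infty$. A nonnegative random variable with finite expectation is finite almost surely, which gives the claim.

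The only delicate step is the conditional expectation. It needs the joint measurability of $H_t(x_t,\omega_t)$ and the independence of $\omega_t$ from $x_t$, so that Fubini legitimately replaces the inner integral by $\nabla F(x_t)$. Lemma \ref{lemma-measurability-deterministic} supplies exactly this. In the finite-$\Omega_t$ case the step is just a finite sum.
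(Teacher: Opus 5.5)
Your proposal is correct and follows the paper's proof essentially step for step. Both use the descent inequality \eqref{eq-F-K-A-Lipschitz-2} with $\mathbf{v}=-\g_t H_t(x_t,\omega_t)$, use the independence of $\omega_t$ from $x_t$ to replace the inner expectation by $\|\nabla F(x_t)\|_{x_t}^2$, telescope against $\min_K F$, and pass from finite expectation to almost sure finiteness; your explicit appeals to Fubini (via Lemma \ref{lemma-measurability-deterministic}) and to monotone convergence simply spell out steps the paper leaves implicit.
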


\begin{proof}
Since $\| H_t(x,\omega)\|_x \leq A$ and $\g_t\leq 1$ for all $t\geq 0$, $x \in K$ and $\omega \in \Omega_t$, we have that, by Lemma \ref{lemma-F-K-A-Lipschitz},  
\begin{eqnarray}
\label{eq-expectation-converges-0} F(x_{t+1}) & = & F(R_{x_t}(-\g_t H_t(x_t,\omega_t))) \leq F(x_t) - \g_t\left\langle \nabla F(x_t), H_t(x_t,\omega_t) \right\rangle_{x_t} + \frac{C_1\g_t^2}{2}\|H_t(x_t,\omega_t)\|_x^2\\
\nonumber & \leq & F(x_t) - \g_t\left\langle \nabla F(x_t), H_t(x_t,\omega_t) \right\rangle_{x_t} + \frac{C_1A^2\g_t^2}{2}.
\end{eqnarray}
Taking expectations on both sides of this inequality, we get that
\[
\E(F(x_{t+1})) \leq \E(F(x_t)) - \g_t\E(\left\langle \nabla F(x_t), H_t(x_t,\omega_t) \right\rangle_{x_t}) + \frac{C_1\g_t^2A^2}{2}.
\]
But $\omega_t$ is independent of $x_t$, which is determined by $\omega_0,\dots, \omega_{t-1}$ and $x_0$. So
\begin{equation}\label{eq-expectation-converges-1}
\E(\left\langle \nabla F(x_t), H_t(x_t,\omega_t) \right\rangle_{x_t}) = \E(\E(\left\langle \nabla F(x_t), H_t(x_t,\omega_t) \right\rangle_{x_t}~\big{|}~x_t)) = \E(\|\nabla F (x_t)\|_{x_t}^2).
\end{equation}
Thus, 
\[
\g_t \E(\|\nabla F(x_t)\|_{x_t}^2) \leq \E(F(x_t)) - \E(F(x_{t+1})) + \frac{C_1A^2\g_t^2}{2}.
\]
Summing this for $t=0,\dots,T$, we get
\[
\sum_{t=0}^T \g_t \E(\|\nabla F(x_t)\|_{x_t}^2) \leq F(x_0) - \E(F(x_{T+1})) + \frac{C_1A^2}{2}\sum_{t=0}^T\g_t^2 \leq F(x_0) - F^\ast + \frac{C_1A^2}{2}\sum_{t=0}^\infty\g_t^2,
\]
where $F^\ast$ is the minimal value of $F$ on the compact set $K$. Since $\sum_{t=0}^\infty\g_t^2 <\infty$, this shows that $\sum_{t=0}^\infty \g_t \E(\|\nabla F(x_t)\|_{x_t}^2)$ converges. It then follows that $\sum_{t=0}^\infty \g_t \|\nabla F(x_t)\|_{x_t}^2$ is finite with probability $1$, that is, $\sum_{t=0}^\infty \g_t \|\nabla F(x_t)\|_{x_t}^2$ converges almost surely. 
\end{proof}

\begin{lemma}\label{lemma-F-x-t-converge}
Both $\sum_{t=0}^\infty \g_t\left\langle \nabla F(x_t), H_t(x_t,\omega_t)\right\rangle_{x_t}$ and $\lim_{t\rightarrow \infty} F(x_t)$ converge almost surely.
\end{lemma}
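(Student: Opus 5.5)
The plan is to split $\sum_t \g_t\langle \nabla F(x_t), H_t(x_t,\omega_t)\rangle_{x_t}$ into a martingale part and a part controlled by Lemma \ref{lemma-expectation-converges}, and then use inequality \eqref{eq-expectation-converges-0} to conclude that $F(x_t)$ converges.

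For the first sum, write
\[
\g_t\left\langle \nabla F(x_t), H_t(x_t,\omega_t)\right\rangle_{x_t} = \g_t\|\nabla F(x_t)\|_{x_t}^2 + \g_t\left\langle \nabla F(x_t), H_t(x_t,\omega_t)-\nabla F(x_t)\right\rangle_{x_t}.
\]
By Lemma \ref{lemma-expectation-converges}, the series of the first terms converges almost surely. Call the second term $\g_t Z_t$ and set $M_T=\sum_{t=0}^{T}\g_t Z_t$. The filtration is generated by $\omega_0,\dots,\omega_{t}$, and Lemma \ref{lemma-measurability-deterministic} guarantees that $Z_t$ is measurable with respect to it. The variable $\omega_t$ is independent of $x_t$, and Assumption \ref{assumption-probability-space-varying} gives $\E_{\Omega_t}(H_t(x,\omega))=\nabla F(x)$. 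Arguing as in \eqref{eq-expectation-converges-1}, we get $\E(Z_t\mid \omega_0,\dots,\omega_{t-1})=0$, so $M_T$ is a martingale. Moreover, $\|\nabla F(x)\|_x\le A$ on $K$, since it is the average of vectors of norm at most $A$. Hence $|Z_t|\le 2A^2$, and $\E(M_T^2)=\sum_{t\le T}\g_t^2\E(Z_t^2)\le 4A^4\sum_t\g_t^2<\infty$, using the orthogonality of martingale increments. Since $M_T$ is bounded in $L^2$, the martingale convergence theorem shows that it converges almost surely. Adding the two parts gives almost sure convergence of $\sum_t \g_t\langle \nabla F(x_t), H_t(x_t,\omega_t)\rangle_{x_t}$.

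For $F(x_t)$, rearrange \eqref{eq-expectation-converges-0} as
\[
F(x_{t+1}) + \g_t\left\langle \nabla F(x_t), H_t(x_t,\omega_t)\right\rangle_{x_t} \le F(x_t) + \tfrac{C_1A^2}{2}\g_t^2 .
\]
Define
\[
W_t = F(x_t) + \sum_{k<t}\g_k\langle \nabla F(x_k),H_k(x_k,\omega_k)\rangle_{x_k} + \tfrac{C_1A^2}{2}\sum_{k\ge t}\g_k^2 .
\]
Then $W_{t+1}\le W_t$ holds pathwise, so $W_t$ is nonincreasing. Along every sample path on which the sum from the first step converges, $W_t$ is bounded below: $F\ge F^\ast$ on $K$, the partial sums of a convergent series are bounded, and the tail $\sum_{k\ge t}\g_k^2$ is nonnegative. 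A nonincreasing sequence that is bounded below converges, so $W_t$ converges on that event. The tail $\sum_{k\ge t}\g_k^2$ tends to $0$ and the partial sums converge, so $F(x_t)=W_t-(\text{convergent terms})$ converges almost surely.

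The main obstacle is the martingale step. One must confirm that the conditional expectation is justified by the independence of $\omega_t$ from $(\omega_0,\dots,\omega_{t-1})$ together with the measurability from Lemma \ref{lemma-measurability-deterministic}, and that boundedness in $L^2$ holds. Once these are in place, the rest of the argument is deterministic and pathwise.
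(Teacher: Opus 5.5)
Your proposal is correct and follows essentially the same argument as the paper: you split the sum into $\g_t\|\nabla F(x_t)\|_{x_t}^2$ plus an $L^2$-bounded martingale increment, and then use a monotone sequence built from inequality \eqref{eq-expectation-converges-0}. The only difference is cosmetic: your $W_t$ uses the partial sums $\sum_{k<t}$ where the paper's $v_t$ uses the tails $-\sum_{\tau\geq t}$, so on the event where the series converges the two sequences differ by the constant $\sum_{t=0}^\infty \g_t\left\langle \nabla F(x_t), H_t(x_t,\omega_t)\right\rangle_{x_t}$.
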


\begin{proof}
Define 
\begin{equation}\label{eq-lemma-F-x-t-converge-1}
u_t = \left\langle \nabla F(x_t), H_t(x_t,\omega_t) - \nabla F(x_t)\right\rangle_{x_t} \text{ and  } z_t = \sum_{\tau=0}^t \g_\tau u_\tau.
\end{equation} 
Since $\|\nabla F(x_t)\|_{x_t} = \|\E_{\Omega_t}(H_t(x_t,\omega))\|_{x_t} \leq \E_{\Omega_t}(\|H_t(x_t,\omega)\|_{x_t}) \leq A$, we have that 
\[
|u_t| \leq \|\nabla F(x_t)\|_{x_t} (\|\nabla F(x_t)\|_{x_t} + \|H_t(x_t,\omega_t)\|_{x_t}) \leq 2A^2 \text{ for } t\geq 0.
\]
Since $\omega_t$ is independent of $\omega_0,\dots, \omega_{t-1}$ and $x_t$ is determined by $\omega_0,\dots, \omega_{t-1}$, we have that $\E(u_t~\big{|}~\omega_0,\dots,\omega_{t-1}) = 0$ for $t \geq 0$ and therefore $\E(z_t~\big{|}~\omega_0,\dots,\omega_{t-1})=z_{t-1}$. Here, note that $z_{t-1}$ is also determined by $\omega_0,\dots,\omega_{t-1}$. This shows that $\{z_t\}_{t=0}^\infty$ is a Martingale with respect to the sequence $\{\omega_t\}_{t=0}^\infty$ of random inputs. Moreover, for $t\geq 0$, we have $\E(u_t) = \E(\E(u_t~\big{|}~\omega_0,\dots,\omega_{t-1}))=0$ and, therefore, $\E(z_t)=0$. So the variance of $z_t$ satisfies
\begin{eqnarray*}
&& Var(z_t)  =  \E(z_t^2) =E((z_{t-1}+\g_t u_t)^2) = \E(z_{t-1}^2 + \g_t^2 u_t^2 +  2\g_t u_t z_{t-1}) \\
& = &   Var(z_{t-1}) + \g_t^2\E(u_t^2) + 2 \g_t\E( u_t z_{t-1}) \leq  Var(z_{t-1}) + 4A^4\g_t^2 + 2\g_t \E(u_t z_{t-1}) \\
& = & Var(z_{t-1}) + 4A^4\g_t^2+ 2\g_t \E(z_{t-1}\E(u_t ~\big{|}~\omega_0,\dots,\omega_{t-1})) = Var(z_{t-1}) + 4A^4\g_t^2.
\end{eqnarray*}
Summing the above inequality from $1$ to $T$, we get that 
\[
Var(z_T) \leq Var(z_0) + 4A^4\sum_{t=1}^T \g_t^2 \leq Var(z_0) + 4A^4\sum_{t=1}^\infty \g_t^2.
\] 
This shows that $\{Var(z_t)\}_{t=0}^\infty$ is bounded. Thus, by the Martingale Convergence Theorem, $\lim_{t\rightarrow \infty}z_t =  \sum_{t=0}^\infty \g_t u_t$ converges almost surely. By Lemma \ref{lemma-expectation-converges}, $\sum_{t=0}^\infty \g_t \|\nabla F(x_t)\|_{x_t}^2$ also converges almost surely. Thus, $\sum_{t=0}^\infty \g_t \left\langle \nabla F(x_t), H_t(x_t,\omega_t)\right\rangle_{x_t} = \sum_{t=0}^\infty \g_t u_t + \sum_{t=0}^\infty \g_t \|\nabla F(x_t)\|_{x_t}^2$ converges almost surely.

Next consider $\{F(x_t)\}_{t=0}^\infty$. Assume that $\sum_{t=0}^\infty \g_t \left\langle \nabla F(x_t), H_t(x_t,\omega_t)\right\rangle_{x_t}$ converges, which, as we have just shown, happens with probability $1$. Define 
\[
v_t = F(x_t) -\sum_{\tau=t}^\infty \g_\tau \left\langle \nabla F(x_\tau), H_\tau (x_\tau,\omega_\tau)\right\rangle_{x_\tau} + \frac{C_1 A^2}{2} \sum_{\tau=t}^\infty \g_\tau^2.
\]
By inequality \eqref{eq-expectation-converges-0}, $\{v_t\}_{t=0}^\infty$ is a decreasing sequence. Since $\{x_t\}_{t=0}^\infty$ is contained in the compact set $K$, $\{F(x_t)\}_{t=0}^\infty$ is a bounded sequence. Since $\sum_{t=0}^\infty \g_t^2$ and $\sum_{t=0}^\infty \g_t\left\langle \nabla F(x_t), H_t(x_t,\omega_t)\right\rangle_{x_t}$ both converge, the sequences $\{\sum_{\tau=t}^\infty \g_\tau^2\}_{t=0}^\infty$ and $\{\sum_{\tau=t}^\infty \g_\tau \left\langle \nabla F(x_\tau), H_\tau (x_\tau,\omega_\tau)\right\rangle_{x_\tau}\}_{t=0}^\infty$ are also bounded. So $\{v_t\}_{t=0}^\infty$ is bounded too. Thus, $\lim_{t \rightarrow \infty} v_t$ converges. Note that 
\[
\lim_{t \rightarrow \infty} \sum_{\tau=t}^\infty \g_\tau^2 = \lim_{t \rightarrow \infty} \sum_{\tau=t}^\infty \g_\tau \left\langle \nabla F(x_\tau), H_\tau(x_\tau,\omega_\tau)\right\rangle_{x_\tau} =0.
\]
This shows that $\lim_{t\rightarrow \infty} F(x_t) = \lim_{t \rightarrow \infty} v_t$ converges when $\sum_{t=0}^\infty \g_t \left\langle \nabla F(x_t), H_t(x_t,\omega_t)\right\rangle_{x_t}$ converges, which happens with probability $1$. Hence, $\lim_{t\rightarrow \infty} F(x_t)$ also converges almost surely.
\end{proof}

\begin{lemma}\cite[Lemmas 2.15]{YXW-2024}\label{lemma-gradient-difference}
For any compact subset $\widetilde{K}$ and any $r>0$, there is a constant $C_{\widetilde{K},r}>0$ such that 
\[
\left|\|\nabla F (R_x(\mathbf{v}))\|_{R_x(\mathbf{v})} - \|\nabla(F\circ R_x)(\mathbf{v})\|_x\right| \leq C_{\widetilde{K},r} \|\mathbf{v}\|_x
\] 
for every $x \in \widetilde{K}$ and every $\mathbf{v} \in T_x \M$ satisfying $\|\mathbf{v}\|_x \leq r$.
\end{lemma}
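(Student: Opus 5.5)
By \eqref{eq-nabla-F-R-adj} in Lemma \ref{lemma-F-K-A-Lipschitz}, $\nabla(F\circ R_x)(\mathbf{v}) = \adj(dR_x|_{\mathbf{v}})(\nabla F(R_x(\mathbf{v})))$. Setting $\mathbf{w} = \nabla F(R_x(\mathbf{v})) \in T_{R_x(\mathbf{v})}\M$, the quantity to bound becomes
\[
\bigl|\|\mathbf{w}\|_{R_x(\mathbf{v})} - \|\adj(dR_x|_{\mathbf{v}})(\mathbf{w})\|_x\bigr|.
\]
The plan is to bound this by an operator-norm estimate, $\|\adj(dR_x|_{\mathbf{v}}) - \text{``isometry''}\| \cdot \|\mathbf{w}\|$, and then use compactness to make everything uniform. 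Since $\adj(dR_x|_{\mathbf{v}})$ maps between different tangent spaces, the cleanest comparison is through norms alone. For any linear map $L$ between inner product spaces and any vector $\mathbf{w}$,
\[
\bigl|\|L\mathbf{w}\| - \|\mathbf{w}\|\bigr| \le \frac{\bigl|\|L\mathbf{w}\|^2 - \|\mathbf{w}\|^2\bigr|}{\|\mathbf{w}\|} = \frac{\bigl|\langle (L\,\adj(L) - \id)\mathbf{w}',\mathbf{w}'\rangle\bigr|}{\|\mathbf{w}\|},
\]
where $L = \adj(dR_x|_{\mathbf{v}})$, so that $\|L\mathbf{w}\|^2 = \langle dR_x|_{\mathbf{v}}\,\adj(dR_x|_{\mathbf{v}})\mathbf{w},\mathbf{w}\rangle_{R_x(\mathbf{v})}$. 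This gives
\[
\bigl|\|L\mathbf{w}\| - \|\mathbf{w}\|\bigr| \le \|G_{x,\mathbf{v}} - \id\|_{op}\,\|\mathbf{w}\|,
\]
with $G_{x,\mathbf{v}} := dR_x|_{\mathbf{v}}\circ \adj(dR_x|_{\mathbf{v}})$, a self-adjoint endomorphism of $T_{R_x(\mathbf{v})}\M$. When $\|L\mathbf{w}\|+\|\mathbf{w}\| \ge \|\mathbf{w}\|$ the division is harmless, and the case $\mathbf{w}=0$ is trivial.

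Next, $\|\mathbf{w}\| = \|\nabla F(R_x(\mathbf{v}))\|$ is uniformly bounded. The set $D = \{\mathbf{v}\in T\M : \pi(\mathbf{v})\in \widetilde K,\ \|\mathbf{v}\|\le r\}$ is compact in $T\M$, so $R(D)$ is compact and the continuous function $\|\nabla F\|$ (with $F\in C^1$) is bounded there.

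It remains to show $\|G_{x,\mathbf{v}} - \id\|_{op} \le C\|\mathbf{v}\|_x$ uniformly on $D$. At $\mathbf{v}=\mathbf{0}_x$ we have $dR_x|_{\mathbf{0}} = \id$, so $G_{x,\mathbf{0}} = \id$. Since $R$ is $C^2$, the map $(x,\mathbf{v})\mapsto dR_x|_{\mathbf{v}}$ is $C^1$, and hence so is $(x,\mathbf{v})\mapsto G_{x,\mathbf{v}}$. Composing with parallel transport along the curve $s\mapsto R_x(s\mathbf{v})$ (or expressing everything in local orthonormal frames) brings $G_{x,s\mathbf{v}}-\id$ back to a fixed space, where the operator norm is unchanged. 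The derivative in $s$ is then bounded by $C\|\mathbf{v}\|$, with $C$ the supremum of the first derivative over the compact set $D$. Integrating over $s\in[0,1]$ yields the linear bound.

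The main obstacle is making this last step rigorous on a manifold. An operator on the varying space $T_{R_x(s\mathbf{v})}\M$ has to be differentiated in $s$, which requires trivializing $T\M$ locally. The cleanest approach is to cover $R(D)$ by finitely many charts carrying smooth orthonormal frames (via Gram–Schmidt), use a Lebesgue-number argument so that each relevant segment lies in one chart for small $\|\mathbf{v}\|\le\delta$, and treat $\|\mathbf{v}\|\ge\delta$ separately by the crude bound $\|G-\id\|_{op}\le M \le (M/\delta)\|\mathbf{v}\|$, where $M$ is a supremum over the compact set $D$.
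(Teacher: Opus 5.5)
Your proposal is correct. The paper does not prove this lemma in the text; it defers to \cite[Lemmas 2.11--15]{YXW-2024}, so there is no in-paper argument to compare against step by step. What you give is a complete, self-contained route, and each step checks out.

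First, the identity \eqref{eq-nabla-F-R-adj} reduces everything to the distortion of the single self-adjoint operator $G_{x,\mathbf{v}}=dR_x|_{\mathbf{v}}\circ\adj(dR_x|_{\mathbf{v}})$ on $T_{R_x(\mathbf{v})}\M$.

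Second, the estimate $\bigl|\|\adj(dR_x|_{\mathbf{v}})\mathbf{w}\|_x-\|\mathbf{w}\|_{R_x(\mathbf{v})}\bigr|\le\|G_{x,\mathbf{v}}-\id\|_{op}\|\mathbf{w}\|_{R_x(\mathbf{v})}$ is right. You get it by dividing $|\langle (G_{x,\mathbf{v}}-\id)\mathbf{w},\mathbf{w}\rangle|$ by a sum of norms that is at least $\|\mathbf{w}\|_{R_x(\mathbf{v})}$.

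Third, the bound $\|G_{x,\mathbf{v}}-\id\|_{op}\le C\|\mathbf{v}\|_x$ follows as you describe. In orthonormal frames the matrix of $G_{x,\mathbf{v}}$ is $\hat{D}\hat{D}^{T}$, where $\hat{D}$ is $C^1$ in $(x,\mathbf{v})$ because $R$ is $C^2$, and this matrix equals $I$ at $\mathbf{v}=\mathbf{0}_x$. Integrating along $s\mapsto s\mathbf{v}$ then gives the linear bound locally. The split into $\|\mathbf{v}\|_x\le\delta$ and $\|\mathbf{v}\|_x\ge\delta$ makes the constant uniform on $D$. For the small-$\|\mathbf{v}\|$ case you also need that $R_x(\mathbf{u})\to x$ uniformly for $x\in\widetilde{K}$ as $\|\mathbf{u}\|_x\to0$, which follows from compactness of $D$ and continuity of $R$. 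This is what guarantees that each curve $s\mapsto R_x(s\mathbf{v})$ stays in one framed chart.

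A useful feature of this route is that it needs only continuity of $\nabla F$, entering through $\sup_{R(D)}\|\nabla F\|$, which is exactly what the standing assumption $F\in C^1$ provides. The only regularity of $R$ beyond continuity that it uses is the $C^1$ dependence of $dR_x|_{\mathbf{v}}$ on $(x,\mathbf{v})$.

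One cosmetic slip: in your first display, $L\,\adj(L)$ and $\mathbf{w}'$ should read $\adj(L)\,L=G_{x,\mathbf{v}}$ and $\mathbf{w}$, as your very next sentence already states.
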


The proof of Lemma \ref{lemma-gradient-difference} is a bit lengthy. We refer the reader to \cite[Lemmas 2.11-15]{YXW-2024} for its details. To complete the proof of Theorem \ref{THM-Omega-varying-deterministic-learning-rate}, we adopt an argument used in \cite{Alber-Iusem-Solodov,Li-Orabona,Mairal}, which is based on the following lemma.

\begin{lemma}\cite[Proposition 2]{Alber-Iusem-Solodov} \cite[Lemma A.5]{Mairal} \label{lemma-Li-Orabona-1}
Let $\{a_t\}_{t=0}^\infty$ and $\{b_t\}_{t=0}^\infty$be two sequence of non-negative numbers. Assume that 
\begin{itemize}
	\item $\sum_{t=0}^\infty a_t b_t$ converges,
	\item $\sum_{t=0}^\infty a_t$ diverges,
	\item there exists an $L\geq 0$ such that $|b_{t+1}-b_t|\leq L a_t$ for $t\geq 0$. 
\end{itemize}
Then $\lim_{t\rightarrow \infty} b_t =0$.
\end{lemma}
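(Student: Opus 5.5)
We argue by contradiction: suppose $b_t\not\to 0$. Then $\limsup_{t\to\infty} b_t>0$, so we can fix $\epsilon>0$ with $b_t>2\epsilon$ for infinitely many $t$. Since $\sum a_t b_t$ converges while $\sum a_t$ diverges, we cannot have $b_t\geq \epsilon$ for all large $t$: that would give $\sum_{t\geq t_0} a_t b_t\geq \epsilon\sum_{t\geq t_0} a_t=\infty$. Hence $b_t<\epsilon$ for infinitely many $t$ as well. The sequence therefore crosses the band $[\epsilon,2\epsilon]$ upward infinitely often, and we extract disjoint crossing intervals as follows.

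Inductively choose indices $s_1<e_1<s_2<e_2<\cdots$ such that $b_{s_k}<\epsilon$, $b_{e_k}>2\epsilon$, and $\epsilon\leq b_t\leq 2\epsilon$ for $s_k<t<e_k$. To do this, take $e_k$ to be the first index after $e_{k-1}$ exceeding $2\epsilon$ that is preceded by some index with value below $\epsilon$. Then take $s_k$ to be the last index before $e_k$ with $b_{s_k}<\epsilon$.

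On each crossing, the increment bound and the triangle inequality give
\[
\epsilon< b_{e_k}-b_{s_k}\leq \sum_{t=s_k}^{e_k-1}|b_{t+1}-b_t|\leq L\sum_{t=s_k}^{e_k-1}a_t .
\]
In particular $L>0$, and $\sum_{t=s_k}^{e_k-1}a_t>\epsilon/L$.

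Next we weight these sums by $b_t$. For $s_k<t<e_k$ we have $b_t\geq\epsilon$. The endpoint term $t=s_k$ is the one delicate spot, and we handle it using the increment bound: $b_{s_k+1}\geq\epsilon$, so $b_{s_k}\geq \epsilon-La_{s_k}$. Since $a_tb_t\to0$, for large $k$ either $a_{s_k}$ is small, in which case dropping the endpoint term costs little in $\sum a_t$, or $b_{s_k}\geq \epsilon/2$. In both cases, for all large $k$,
\[
\sum_{t=s_k}^{e_k-1}a_tb_t\;\geq\;\frac{\epsilon}{2}\Bigl(\sum_{t=s_k}^{e_k-1}a_t-\frac{\epsilon}{2L}\Bigr)\;\geq\;\frac{\epsilon^2}{4L}.
\]
The first inequality splits into the two cases just described: either $a_{s_k}\leq\epsilon/(2L)$ and we drop that term, or $b_{s_k}\geq\epsilon/2$ and we keep it.

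The blocks $[s_k,e_k)$ are disjoint, so summing over infinitely many $k$ makes $\sum a_tb_t$ diverge, contradicting the hypothesis. Therefore $b_t\to0$.

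The main obstacle is the bookkeeping in constructing the crossing intervals and controlling the left endpoint term. Everything else is the triangle inequality applied to $|b_{t+1}-b_t|\leq La_t$ together with the fact that the tails of the convergent series $\sum a_tb_t$ go to $0$.
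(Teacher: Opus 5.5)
There is a genuine gap in your treatment of the left endpoint of each upcrossing; the paper gives no proof here and only cites \cite{Alber-Iusem-Solodov,Mairal}, so your argument has to stand on its own. The dichotomy ``either $a_{s_k}\leq \epsilon/(2L)$ or $b_{s_k}\geq \epsilon/2$'' does not cover all cases. Once $a_{s_k}>\epsilon/(2L)$, the bound $b_{s_k}\geq \epsilon-La_{s_k}$ gives nothing useful. The fact $a_tb_t\to 0$ actually pushes the wrong way: if $a_{s_k}$ stays bounded below, it forces $b_{s_k}\to 0$. Concretely, nothing you use rules out $b_{s_k}=0$, $a_{s_k}=3\epsilon/L$ and $b_{s_k+1}=3\epsilon$. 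Then $e_k=s_k+1$, the block is $\{s_k\}$, and $\sum_{t=s_k}^{e_k-1}a_tb_t=0$. This configuration is compatible with the increment bound, with your crossing structure, and with $a_tb_t\to 0$. So $\sum_{t=s_k}^{e_k-1}a_tb_t\geq \epsilon^2/(4L)$ does not follow from the facts you invoke. In such an excursion, the cost in $\sum a_tb_t$ is paid on the way back down below $\epsilon$, which your argument never examines. An upcrossing block can only be rescued if its first step is small, for instance if $a_t\to 0$, and that is not assumed.

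The fix is to count downcrossings instead of upcrossings. Take $d_1$ with $b_{d_1}>2\epsilon$, and let $f_1>d_1$ be the first index with $b_{f_1}<\epsilon$. Then take $d_2>f_1$ with $b_{d_2}>2\epsilon$, and so on; these indices exist by what you already proved. Now every $t$ with $d_k\leq t\leq f_k-1$, including the left endpoint, satisfies $b_t\geq\epsilon$. Hence
\[
\epsilon<b_{d_k}-b_{f_k}\leq \sum_{t=d_k}^{f_k-1}|b_{t+1}-b_t|\leq L\sum_{t=d_k}^{f_k-1}a_t\leq \frac{L}{\epsilon}\sum_{t=d_k}^{f_k-1}a_tb_t .
\]
Each of the infinitely many disjoint blocks $[d_k,f_k)$ therefore contributes more than $\epsilon^2/L$ to $\sum a_tb_t$, which is a contradiction. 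The rest of your proposal carries over unchanged: infinitely many values below $\epsilon$ and above $2\epsilon$, the telescoping bound, $L>0$, and disjointness of the blocks.
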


\begin{lemma}\label{lemma-nabla-F-x-t-converge}
$\lim_{t \rightarrow \infty} \|\nabla F(x_t)\|_{x_t} =0$ almost surely and in mean square.
\end{lemma}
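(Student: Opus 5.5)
We apply Lemma \ref{lemma-Li-Orabona-1} pathwise with $a_t=\g_t$ and $b_t=\|\nabla F(x_t)\|_{x_t}^2$. Two of its hypotheses are already in hand: $\sum_t \g_t=\infty$ by \eqref{standard-condition-learning-rates}, and $\sum_t \g_t\|\nabla F(x_t)\|_{x_t}^2<\infty$ almost surely by Lemma \ref{lemma-expectation-converges}. What remains is a deterministic bound $|b_{t+1}-b_t|\le L\g_t$ with $L$ independent of the sample path. Once we have it, $\|\nabla F(x_t)\|_{x_t}^2\to 0$ on the almost sure event of Lemma \ref{lemma-expectation-converges}, which gives almost sure convergence of $\|\nabla F(x_t)\|_{x_t}$ to $0$.

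To get the Lipschitz-type bound, set $\mathbf{v}_t=-\g_t H_t(x_t,\omega_t)$. Since $x_t\in K$, $\g_t\le 1$ and $\|H_t\|\le A$, we have $\|\mathbf{v}_t\|_{x_t}\le A\g_t\le A$. Write
\[
\bigl|\|\nabla F(x_{t+1})\|_{x_{t+1}}-\|\nabla F(x_t)\|_{x_t}\bigr|\le \bigl|\|\nabla F(R_{x_t}(\mathbf{v}_t))\|-\|\nabla(F\circ R_{x_t})(\mathbf{v}_t)\|_{x_t}\bigr| + \|\nabla(F\circ R_{x_t})(\mathbf{v}_t)-\nabla F(x_t)\|_{x_t}.
\]
Lemma \ref{lemma-gradient-difference} with $\widetilde K=K$ and $r=A$ bounds the first term by $C_{K,A}A\g_t$, and \eqref{eq-F-K-A-Lipschitz-1} bounds the second by $C_1A\g_t$. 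Next, $\|\nabla F(x)\|_x\le A$ on $K$, because it is the expectation of $H_t$; this was shown in the proof of Lemma \ref{lemma-F-x-t-converge}. Using $|a^2-b^2|=|a-b|(a+b)$ we then get $|b_{t+1}-b_t|\le 2A^2(C_{K,A}+C_1)\g_t$. This is the hypothesis we need, with $L=2A^2(C_{K,A}+C_1)$.

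For mean square convergence we need $\E(\|\nabla F(x_t)\|_{x_t}^2)\to0$. The cleanest route is to apply Lemma \ref{lemma-Li-Orabona-1} again, now to the deterministic sequences $a_t=\g_t$ and $b_t=\E(\|\nabla F(x_t)\|_{x_t}^2)$. Lemma \ref{lemma-expectation-converges} gives $\sum_t\g_t b_t<\infty$ directly. Taking expectations in the pathwise bound gives $|b_{t+1}-b_t|\le \E|\,\|\nabla F(x_{t+1})\|^2-\|\nabla F(x_t)\|^2|\le L\g_t$. Hence $b_t\to 0$. Alternatively, one can use dominated convergence from the almost sure statement, since $\|\nabla F(x_t)\|_{x_t}^2\le A^2$.

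The only delicate step is the pathwise estimate, specifically the comparison of gradients at the two different points $x_t$ and $x_{t+1}$. This is exactly what Lemma \ref{lemma-gradient-difference} supplies. We have to check that its hypotheses hold uniformly: $x_t\in K$ and $\|\mathbf{v}_t\|\le A$ for all $t$. We also need measurability of $\|\nabla F(x_t)\|^2$ so that the expectations are defined, and this follows from Lemma \ref{lemma-measurability-deterministic}.
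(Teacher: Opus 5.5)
Your proposal is correct and follows the paper's proof essentially step by step. You derive the same pathwise bound $\bigl|\|\nabla F(x_{t+1})\|_{x_{t+1}}^2-\|\nabla F(x_t)\|_{x_t}^2\bigr|\le 2A^2(C_1+C_{K,A})\gamma_t$ from Lemmas \ref{lemma-gradient-difference} and \ref{lemma-F-K-A-Lipschitz}, then apply Lemma \ref{lemma-Li-Orabona-1} both pathwise and to the expectations via Lemma \ref{lemma-expectation-converges}; your dominated-convergence remark is also a valid alternative for the mean-square part.
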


\begin{proof}(Adapted from \cite{Li-Orabona}.)
Let $C_1$ and $C_2=C_{K,A}$ be the positive constants given in Lemmas \ref{lemma-F-K-A-Lipschitz} and \ref{lemma-gradient-difference}. Then 
\begin{eqnarray*}
&& \left|\|\nabla F(x_{t+1})\|_{x_{t+1}}^2 -\|\nabla F(x_{t})\|_{x_{t}}^2\right| = (\|\nabla F(x_{t+1})\|_{x_{t+1}}+\|\nabla F(x_{t})\|_{x_{t}})~\big{|}\|\nabla F(x_{t+1})\|_{x_{t+1}} -\|\nabla F(x_{t})\|_{x_{t}}\big{|} \\
& \leq & 2A\big{|}\|\nabla F(x_{t+1})\|_{x_{t+1}} -\|\nabla F(x_{t})\|_{x_{t}}\big{|}\\
& = & 2A\big{|}\|\nabla F(R_{x_t}\left(-\g_t H_t(x_t,\omega_t)\right))\|_{R_{x_t}\left(-g_t H_t(x_t,\omega_t)\right)} -\|\nabla F(x_{t})\|_{x_{t}}\big{|} \\
& \leq & 2A(\big{|}\|\nabla F(R_{x_t}\left(-\g_t H_t(x_t,\omega_t)\right))\|_{R_{x_t}\left(-\g_t H_t(x_t,\omega_t)\right)} -\|\nabla (F\circ R_{x_t})\left(-\g_t H_t(x_t,\omega_t)\right)\|_{x_{t}}\big{|} \\
&& +  \big{|} \|\nabla (F\circ R_{x_t})\left(-\g_t H_t(x_t,\omega_t)\right)\|_{x_{t}}  -\|\nabla F(x_{t})\|_{x_{t}}\big{|})
\end{eqnarray*}
By Lemma \ref{lemma-gradient-difference},
\begin{eqnarray*}
&& \big{|}\|\nabla F(R_{x_t}\left(-\g_t H_t(x_t,\omega_t)\right))\|_{R_{x_t}\left(-\g_t H_t(x_t,\omega_t)\right)} -\|\nabla (F\circ R_{x_t})\left(-\g_t H_t(x_t,\omega_t)\right)\|_{x_{t}}\big{|} \\
& \leq &  C_2 \g_t  \|H_t(x_t,\omega_t)\|_{x_{t}} \leq C_2 A \g_t.
\end{eqnarray*}
By Lemma \ref{lemma-F-K-A-Lipschitz},
\[
\big{|} \|\nabla (F\circ R_{x_t})\left(-\g_t H_t(x_t,\omega_t)\right)\|_{x_{t}} -\|\nabla F(x_{t})\|_{x_{t}}\big{|} \leq C_1\g_t  \|H_t(x_t,\omega_t)\|_{x_{t}}\leq C_1 A \g_t.
\]
Combining the above, we have that
\begin{equation}\label{eq-gradient-square-difference-bound-dterministic}
\left|\|\nabla F(x_{t+1})\|_{x_{t+1}}^2 -\|\nabla F(x_{t})\|_{x_{t}}^2\right| \leq 2 A^2 (C_1+C_2) \g_t.
\end{equation}
Taking expectations on both sides of inequality \eqref{eq-gradient-square-difference-bound-dterministic}, we get that 
\begin{equation}\label{eq-gradient-square-difference-bound-dterministic-exp}
\left|\E(\|\nabla F(x_{t+1})\|_{x_{t+1}}^2) -\E(\|\nabla F(x_{t})\|_{x_{t}}^2)\right|\leq \E\left(\left|\|\nabla F(x_{t+1})\|_{x_{t+1}}^2 -\|\nabla F(x_{t})\|_{x_{t}}^2\right|\right) \leq 2 A^2 (C_1+C_2) \g_t.
\end{equation}

In sumary, we have
\begin{itemize}
	\item $\sum_{t=0}^\infty \g_t \E(\|\nabla F(x_t)\|_{x_t}^2)$ converges and $\sum_{t=0}^\infty \g_t \|\nabla F(x_t)\|_{x_t}^2$ converges almost surely by Lemma \ref{lemma-expectation-converges},
	\item $\sum_{t=0}^\infty \g_t = \infty$ by condition \eqref{standard-condition-learning-rates}, 
	\item inequalities \eqref{eq-gradient-square-difference-bound-dterministic} and \eqref{eq-gradient-square-difference-bound-dterministic-exp}.
\end{itemize}
Thus, by Lemma \ref{lemma-Li-Orabona-1}, we know that $\lim_{t \rightarrow \infty} \|\nabla F(x_t)\|_{x_t} =0$ almost surely and in mean square.
\end{proof}

\begin{proof}[Proof of Theorem \ref{THM-Omega-varying-deterministic-learning-rate}]
It is clear that Theorem \ref{THM-Omega-varying-deterministic-learning-rate} follows from Lemmas \ref{lemma-F-x-t-converge} and \ref{lemma-nabla-F-x-t-converge}.
\end{proof}

\subsection{The proof of Theorem \ref{THM-Omega-varying-adaptive-learning-rate}} 

Except some cosmetic modifications, our proof of Theorem \ref{THM-Omega-varying-adaptive-learning-rate} is almost identical to that of \cite[Theorem 2.7]{YXW-2024}, which is a close adaptation of Li and Orabona's proof for \cite[Theorem 2]{Li-Orabona}. The current proof shares several lemmas with the proof for Theorem \ref{THM-Omega-varying-deterministic-learning-rate}. Unless otherwise specified, all notations in this subsection are from Theorem \ref{THM-Omega-varying-adaptive-learning-rate}.

\begin{lemma}\label{lemma-measurability-adaptive}
For every $t\geq 0$, 
\begin{enumerate}
	\item the adaptive learning rate $\eta_t=\eta_t(\omega_0,\dots,\omega_{t-1})$ when viewed as a function $\eta_t:(\prod_{\tau =0}^{t-1} \Omega_\tau, \bigotimes_{\tau =0}^{t-1} \fil_\tau) \rightarrow (\R, \B)$ is measurable, where $\B$ is the standard Borel $\sigma$-algebra on $\R$,
	\item the iterate $x_t=x_t(\omega_0,\dots,\omega_{t-1})$ when viewed as a function $x_t:(\prod_{\tau =0}^{t-1} \Omega_\tau, \bigotimes_{\tau =0}^{t-1} \fil_\tau) \rightarrow (\M, \B_{\M})$ is measurable, where $\bigotimes_{\tau =0}^{t-1} \fil_t$ is the product $\sigma$-algebra generated by $\{\prod_{\tau =0}^{t-1} F_\tau ~\big{|}~ F_\tau \in\fil_\tau \text{ for } \tau=0,1,\dots, t-1\}$,
	\item the random gradient $H_t(x_t,\omega_t)$ when viewed as a function $(\prod_{\tau =0}^{t} \Omega_\tau, \bigotimes_{\tau =0}^{t} \fil_\tau) \rightarrow (T\M, \B_{T\M})$ is measurable.
\end{enumerate}
\end{lemma}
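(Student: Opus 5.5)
The plan is to mirror the proof of Lemma \ref{lemma-measurability-deterministic}, splitting along the two alternatives in Assumption \ref{assumption-probability-space-varying}(1)(b). In Case (i), every $\Omega_\tau$ is finite and $\fil_\tau$ is its full power set. Then $\prod_{\tau=0}^{t-1}\Omega_\tau$ is finite and $\bigotimes_{\tau=0}^{t-1}\fil_\tau$ contains every singleton, so it is the full power set. Hence every function on it is measurable, and all three claims hold trivially.

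In Case (ii), each $H_t$ is $\B_{\M}\otimes\fil_t$-measurable, and I will prove the three claims simultaneously by induction on $t$, in the order (1), (2), (3). For $t=0$, both $\eta_0=\alpha/\beta^{\frac12+\ve}$ and $x_0$ are constants, hence measurable. Claim (3) for $t=0$ is then just the measurability of $\omega_0\mapsto H_0(x_0,\omega_0)$, which is a section of the measurable map $H_0$.

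For the inductive step, assume (1)--(3) hold for all indices up to $t-1$.

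\emph{Step for (1).} The norm map $T\M\to\R$, $\mathbf{v}\mapsto\|\mathbf{v}\|^2$, is continuous, hence Borel. Composing it with the measurable maps $H_k(x_k,\omega_k)$ for $k\le t-1$ (each pulled back along the coordinate projection $\prod_{\tau=0}^{t-1}\Omega_\tau\to\prod_{\tau=0}^{k}\Omega_\tau$, which is measurable) gives measurable real functions. Finite sums of these are measurable. Finally, $s\mapsto \alpha(\beta+s)^{-\frac12-\ve}$ is continuous on $[0,\infty)$, so $\eta_t$ is measurable.

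\emph{Step for (2).} The map $(\eta_{t-1},H_{t-1}(x_{t-1},\omega_{t-1}))$ into $\R\times T\M$ is measurable. Scalar multiplication $\R\times T\M\to T\M$ is continuous and $R$ is continuous. Hence $x_t=R(-\eta_{t-1}H_{t-1}(x_{t-1},\omega_{t-1}))$ is measurable.

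\emph{Step for (3).} The pair $(x_t,\omega_t)\colon\prod_{\tau=0}^{t}\Omega_\tau\to\M\times\Omega_t$ is measurable for $\B_{\M}\otimes\fil_t$, because each component is measurable. Composing with $H_t$ gives (3).

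The only point needing care, which I regard as the main obstacle, is the measurability of the joint map $(\eta_{t-1},H_{t-1}(\cdots))$ into the product $\R\times T\M$ with its Borel $\sigma$-algebra, together with continuity of scalar multiplication there. This is resolved by second countability of $\R$ and $T\M$, which gives $\B_{\R\times T\M}=\B\otimes\B_{T\M}$. With that identity, a map into the product is measurable as soon as each component is. The same fact is used implicitly for $(x_t,\omega_t)$ in Lemma \ref{lemma-measurability-deterministic}.
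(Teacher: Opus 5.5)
Your proposal is correct and follows the paper's proof essentially step for step. Both handle the finite case trivially, then in Case (ii) induct by establishing measurability of $\eta_t$, then of $x_t$ via continuity of scalar multiplication and $R$, then of $H_t(x_t,\omega_t)$ by composing the measurable pair $(x_t,\omega_t)$ with $H_t$. Your appeal to second countability to get $\B_{\R\times T\M}=\B\otimes\B_{T\M}$ usefully justifies the measurability of the random scalar product $\eta_{t-1}H_{t-1}(x_{t-1},\omega_{t-1})$, which the paper merely asserts. For $(x_t,\omega_t)$ this identity is not needed, since the target there already carries the product $\sigma$-algebra $\B_{\M}\otimes\fil_t$.
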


\begin{proof}
According to Assumption \ref{assumption-probability-space-varying}, Lemma \ref{lemma-measurability-deterministic} needs to be proved in two cases.

Case (i) For every $t\geq 0$, the sample space $\Omega_t$ is a finite set, and the event space $\fil_t$ consists of all the subsets of $\Omega_t$. The lemma is trivially true in this case. 

Case (ii) For every $t\geq 0$, $H_t:(\M \times \Omega_t, \B_{\M}\otimes \fil_t) \rightarrow (T\M, \B_{T\M})$ is measurable. We prove the lemma by an induction on $t$ in this case. When $t=0$, $x_0$ and $\eta_0=\frac{\alpha}{\beta^{\frac{1}{2}+\ve}}$ are constant and therefore measurable. Furthermore, $H_0(x_0,\omega_0)$ is measurable since $H_0$ is measurable. Now assume that the lemma is true for $0\leq t \leq T-1$ for some $T\geq 1$. First, since $H_t(x_t,\omega_t)$ is measurable for $0\leq t \leq T-1$, we know that $\eta_T = \frac{\alpha}{(\beta + \sum_{k=0}^{T-1}\|H_k(x_k,\omega_k)\|_{x_k}^2)^{\frac{1}{2}+\ve}}$ is measurable since the Riemannian norm as a function $(T\M, \B_{T\M}) \rightarrow (\R, \B)$ is continuous and therefore measurable. Next, note that the scalar product $\eta_{T-1}H_{T-1}(x_{T-1}, \omega_{T-1})$ is a measurable function  $(\prod_{\tau =0}^{T-1} \Omega_\tau, \bigotimes_{\tau =0}^{t} \fil_\tau) \rightarrow (T\M, \B_{T\M})$. Hence, $x_{T}=R(\eta_{T-1}H_{T-1}(x_{T-1},\omega_{T-1})): (\prod_{\tau =0}^{T-1} \Omega_\tau, \bigotimes_{\tau =0}^{T-1} \fil_\tau) \rightarrow (\M, \B_{\M})$ is measurable since $R:(T\M,\B_{T\M}) \rightarrow (\M,\B_{\M})$ is continuous and therefore measurable.  Finally, note that the function $(x_T,\omega_T): (\prod_{\tau =0}^{T} \Omega_\tau, \bigotimes_{\tau =0}^{T} \fil_\tau) \rightarrow (\M\times \Omega_T, \B_{\M}\otimes \fil_T)$ is measurable. Since $H_T$ is measurable, the composition $H_T(x_T,\omega_T)$ is a measurable function $(\prod_{\tau =0}^{T} \Omega_\tau, \bigotimes_{\tau =0}^{T} \fil_\tau) \rightarrow (T\M, \B_{T\M})$. This completes the induction and proves the lemma in Case (ii).
\end{proof}

\begin{lemma}\label{lemma-bound-F-gradient-sum}
Recall that $F^{\ast}= \min\{F(x)~\big{|}~x\in K\}$. Then, for any $T\geq 1$, 
\begin{equation}\label{eq-bound-F-gradient-sum}
\E(\sum_{t=0}^T \eta_t \|\nabla F(x_t)\|_{x_t}^2) \leq F(x_0) - F^{\ast} + \frac{C_1}{2}\E(\sum_{t=0}^T \eta_t^2 \|H_t(x_t,\omega_t)\|_{x_t}^2),
\end{equation}
where $C_1$ is the positive constant from Lemma \ref{lemma-F-K-A-Lipschitz}.
\end{lemma}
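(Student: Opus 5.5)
We follow the one-step descent argument of Lemma \ref{lemma-expectation-converges}, but keep the adaptive step $\eta_t$ inside the expectation. The first point is that $\eta_t$ is small enough for Lemma \ref{lemma-F-K-A-Lipschitz} to apply. Since $\alpha\leq \beta^{\frac12+\ve}$, we have $\eta_t\leq \alpha/\beta^{\frac12+\ve}\leq 1$ for all $t$. Combined with $x_t\in K$ and $\|H_t(x_t,\omega_t)\|_{x_t}\leq A$, the vector $\mathbf{v}=-\eta_t H_t(x_t,\omega_t)$ satisfies $\|\mathbf{v}\|_{x_t}\leq A$. Inequality \eqref{eq-F-K-A-Lipschitz-2} then gives, pointwise,
\[
F(x_{t+1}) \leq F(x_t) - \eta_t\left\langle \nabla F(x_t), H_t(x_t,\omega_t)\right\rangle_{x_t} + \frac{C_1}{2}\eta_t^2\|H_t(x_t,\omega_t)\|_{x_t}^2 .
\]

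Next we take expectations. All terms are bounded because $F$ is bounded on $K$, $\eta_t\leq 1$ and $\|\nabla F\|\leq A$ on $K$. By Lemma \ref{lemma-measurability-adaptive}, they are also measurable functions of $(\omega_0,\dots,\omega_t)$, so the expectations are well defined. The key observation is that $\eta_t$ and $x_t$ are both functions of $\omega_0,\dots,\omega_{t-1}$ alone, and $\omega_t$ is independent of these. Conditioning on $\omega_0,\dots,\omega_{t-1}$ and using $\E_{\Omega_t}(H_t(x,\omega))=\nabla F(x)$ (via Fubini on the product measure), we get
\[
\E\left(\eta_t\left\langle \nabla F(x_t), H_t(x_t,\omega_t)\right\rangle_{x_t}\right) = \E\left(\eta_t \|\nabla F(x_t)\|_{x_t}^2\right),
\]
which mirrors \eqref{eq-expectation-converges-1}. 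This step uses essentially that $\eta_t$ depends only on past gradients and not on $H_t(x_t,\omega_t)$ itself. It is the only point requiring care, and it is exactly where the choice of summing up to $t-1$ in \eqref{eq-SGD-update-Omega-varying-adaptive-learning-rate} matters.

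Finally, rearrange to obtain
\[
\E(\eta_t\|\nabla F(x_t)\|_{x_t}^2)\leq \E(F(x_t))-\E(F(x_{t+1}))+\frac{C_1}{2}\E(\eta_t^2\|H_t(x_t,\omega_t)\|_{x_t}^2),
\]
and sum over $t=0,\dots,T$. The sum telescopes to $F(x_0)-\E(F(x_{T+1}))$. Since $x_{T+1}\in K$, we have $\E(F(x_{T+1}))\geq F^\ast$. Linearity of expectation for finite sums then yields \eqref{eq-bound-F-gradient-sum}.
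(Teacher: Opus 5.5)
Your proposal is correct and follows the paper's proof essentially step for step. You use $\eta_t\le\eta_0\le 1$ so that inequality \eqref{eq-F-K-A-Lipschitz-2} applies, take expectations, and replace $\E(\eta_t\langle\nabla F(x_t),H_t(x_t,\omega_t)\rangle_{x_t})$ by $\E(\eta_t\|\nabla F(x_t)\|_{x_t}^2)$ because $x_t,\eta_t$ depend only on $\omega_0,\dots,\omega_{t-1}$; you then telescope and use $\E(F(x_{T+1}))\ge F^\ast$. Your per-step inequality also correctly keeps the factor $\eta_t^2$ that the paper's intermediate display \eqref{eq-bound-F-gradient-sum-1} drops by a typo.
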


\begin{proof}
Since $x_t\in K$ and $\eta_t\leq  \eta_0=\frac{\alpha}{\beta^{\frac{1}{2}+\ve}}\leq 1$ for all $t\geq 0$, we have that 
\begin{equation}\label{eq-eta-H-bound}
\| \eta_t H_t(x_t,\omega)\|_{x_t} \leq A \text{ for } t\geq 0.
\end{equation} 
Then, by Lemma \ref{lemma-F-K-A-Lipschitz},
\begin{equation}\label{eq-lemma-F-K-A-Lipschitz-ada}
F(x_{t+1}) = F(R_{x_t}(-\eta_t H_t(x_t,\omega_t)) \leq F(x_{t}) - \eta_t\left\langle \nabla F(x_t),  H_t(x_t,\omega_t)\right\rangle_{x_t} + \frac{C_1}{2} \eta_t^2\|H_t(x_t,\omega_t) \|_{x_t}^2.
\end{equation}
Taking expectations on both sides, we get that
\[
\E(F(x_{t+1})) \leq \E(F(x_{t})) - \E(\eta_t\left\langle \nabla F(x_t),  H_t(x_t,\omega_t)\right\rangle_{x_t}) + \frac{C_1}{2} \E(\eta_t^2\|H_t(x_t,\omega_t) \|_{x_t}^2)).
\]
Note that
\[
\E(\eta_t\left\langle \nabla F(x_t),  H_t(x_t,\omega_t)\right\rangle_{x_t}) = \E(\E(\eta_t\left\langle \nabla F(x_t),  H_t(x_t,\omega_t)\right\rangle_{x_t}~\big{|}~ x_t,\eta_t)) = \E(\eta_t\left\langle \nabla F(x_t),  \nabla F(x_t)\right\rangle_{x_t}).
\]
So 
\begin{equation}\label{eq-bound-F-gradient-sum-1}
\E(\eta_t \|\nabla F(x_t)\|_{x_t}^2) \leq \E(F(x_t)) - \E(F(x_{t+1})) + \frac{C_1}{2}\E( \|H_t(x_t,\omega_t)\|_{x_t}^2).
\end{equation}
Summing inequality \eqref{eq-bound-F-gradient-sum-1} from $0$ to $T$, we get that
\begin{equation}\label{eq-bound-F-gradient-sum-2}
\E(\sum_{t=0}^T \eta_t \|\nabla F(x_t)\|_{x_t}^2) \leq F(x_0) - \E(F(x_{T+1})) + \frac{C_1}{2}\E(\sum_{t=0}^T \eta_t^2 \|H_t(x_t,\omega_t)\|_{x_t}^2),
\end{equation}
where $\E(F(x_0)) = F(x_0)$ since $x_0$ is fixed. But $\E(F(x_{T+1}))\geq F^{\ast}$ since $\{x_{t}\}_{t=0}^\infty \subset K$. This prove inequality \eqref{eq-bound-F-gradient-sum}.
\end{proof}

\begin{lemma}\cite[Lemma 2]{Li-Orabona}  \label{lemma-Li-Orabona-2}
Let $a_0>1$, $a_t\geq 0$ for $t=1,\dots, T$ and $b>1$. Then 
\[
\sum_{t=1}^T \frac{a_t}{(a_0+\sum_{i=1}^ta_i)^b} \leq \frac{1}{(b-1)a_0^{b-1}}.
\]
\end{lemma}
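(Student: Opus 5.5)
We compare the sum with an integral. Write $S_t = a_0 + \sum_{i=1}^t a_i$ for $t = 0, 1, \dots, T$. Then $S_0 = a_0 > 1$, the sequence $\{S_t\}$ is non-decreasing, and $a_t = S_t - S_{t-1}$ for $t \geq 1$. The $t$-th term of the sum becomes $\frac{S_t - S_{t-1}}{S_t^b}$. Since $b > 1$, the function $s \mapsto s^{-b}$ is positive and decreasing on $(0,\infty)$. Hence for every $s \in [S_{t-1}, S_t]$ we have $S_t^{-b} \leq s^{-b}$, and therefore
\[
\frac{S_t - S_{t-1}}{S_t^b} \leq \int_{S_{t-1}}^{S_t} s^{-b}\, ds .
\]
When $a_t = 0$ both sides vanish, so this inequality holds trivially in that case.

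Summing over $t = 1, \dots, T$, the integrals telescope:
\[
\sum_{t=1}^T \frac{a_t}{(a_0 + \sum_{i=1}^t a_i)^b} \leq \int_{S_0}^{S_T} s^{-b}\, ds = \frac{1}{b-1}\left(S_0^{1-b} - S_T^{1-b}\right) \leq \frac{1}{(b-1)\, a_0^{b-1}} .
\]
The last step drops the nonnegative term $S_T^{1-b}/(b-1)$ and uses $S_0 = a_0$.

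The only delicate point is the direction of the comparison. The denominator must be evaluated at the right endpoint $S_t$, which is exactly what the statement gives, so that monotonicity bounds each term above by the integral. The hypothesis $a_0 > 1$ is not needed for this argument; $a_0 > 0$ suffices to keep every $S_t$ positive and the integral finite.
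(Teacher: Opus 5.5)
Your proof is correct, and it is the standard integral-comparison argument of Li and Orabona: bound each term $\frac{S_t-S_{t-1}}{S_t^b}$ by $\int_{S_{t-1}}^{S_t}s^{-b}\,ds$ and telescope. The paper only cites that source and does not reproduce a proof. Your observation that $a_0>0$ suffices is also correct and worth keeping, because the paper applies the lemma with $a_0=\beta$ and $b=1+2\varepsilon$, and it never assumes $\beta>1$.
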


\begin{lemma}\label{lemma-gradient-square-sum-converge}
\begin{itemize}
	\item $\sum_{t=0}^\infty \eta_t^2\| H_t(x_t, \omega_t)\|_{x_t}^2$ converges,
	\item $\sum_{t=0}^\infty \eta_t\| \nabla F(x_t)\|_{x_t}^2$ converges almost surely,
	\item $\sum_{t=0}^\infty \eta_t = \infty$.
\end{itemize}
\end{lemma}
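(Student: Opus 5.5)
The three bullets will be handled in order, following Li and Orabona's argument.

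\emph{First bullet.} Write $a_k=\|H_k(x_k,\omega_k)\|_{x_k}^2$. Then
\[
\eta_t^2 a_t=\frac{\alpha^2 a_t}{(\beta+\sum_{k=0}^{t-1}a_k)^{1+2\ve}}.
\]
Lemma \ref{lemma-Li-Orabona-2} cannot be applied directly, because the denominator stops at $t-1$ rather than $t$. Since $a_t\leq A^2$, we have
\[
\beta+\sum_{k=0}^{t}a_k\leq \beta+A^2+\sum_{k=0}^{t-1}a_k\leq \Big(1+\frac{A^2}{\beta}\Big)\Big(\beta+\sum_{k=0}^{t-1}a_k\Big).
\]
Hence, up to the constant factor $(1+A^2/\beta)^{1+2\ve}$, each term is bounded by $a_t/(\beta+\sum_{k\le t}a_k)^{1+2\ve}$. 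To apply Lemma \ref{lemma-Li-Orabona-2}, which requires $a_0>1$, I will rescale: divide numerator and denominator by $\beta/2$, say. This gives a leading constant $2>1$ and exponent $b=1+2\ve>1$, so the partial sums are bounded uniformly in $T$ by a deterministic constant. Therefore $\sum_t\eta_t^2\|H_t(x_t,\omega_t)\|_{x_t}^2$ converges surely, not merely almost surely.

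\emph{Second bullet.} The deterministic bound $S$ from the first bullet gives $\E(\sum_{t=0}^T\eta_t^2\|H_t\|^2)\leq S$. Substituting this into Lemma \ref{lemma-bound-F-gradient-sum} yields
\[
\E\Big(\sum_{t=0}^T\eta_t\|\nabla F(x_t)\|_{x_t}^2\Big)\leq F(x_0)-F^\ast+\tfrac{C_1}{2}S
\]
for all $T$. The terms are nonnegative, so monotone convergence shows that $\E(\sum_{t=0}^\infty\eta_t\|\nabla F(x_t)\|_{x_t}^2)<\infty$. Hence the series is finite almost surely.

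\emph{Third bullet.} Since $a_k\leq A^2$, we get $\eta_t\geq \alpha/(\beta+tA^2)^{1/2+\ve}$. Because $\ve\leq \tfrac12$, the exponent $\tfrac12+\ve$ is at most $1$. So $\eta_t$ dominates a constant multiple of $(1+t)^{-1}$ for large $t$, and the harmonic series diverges, giving $\sum_t\eta_t=\infty$ surely.

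I expect the main obstacle to be the first bullet: correctly matching the lagged denominator to the form of Lemma \ref{lemma-Li-Orabona-2}, including the normalization needed for $a_0>1$. The other two bullets are routine once the uniform deterministic bound is in hand.
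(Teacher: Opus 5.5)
Your proof is correct. For the second and third bullets it is the paper's argument: the deterministic bound fed into Lemma~\ref{lemma-bound-F-gradient-sum}, and then $\eta_t\geq \alpha/(\beta+tA^2)^{\frac{1}{2}+\ve}$ with $\frac{1}{2}+\ve\leq 1$.

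The genuine difference is how you handle the lagged denominator in the first bullet. The paper splits additively, $\eta_t^2=\eta_{t+1}^2+(\eta_t^2-\eta_{t+1}^2)$. The $\eta_{t+1}^2$ part has exactly the non-lagged form required by Lemma~\ref{lemma-Li-Orabona-2}. The remainder telescopes because $\{\eta_t\}$ is decreasing and $\|H_t(x_t,\omega_t)\|_{x_t}^2\leq A^2$. This gives the bound $\frac{\alpha^2}{2\ve\beta^{2\ve}}+\frac{A^2\alpha^2}{\beta^{1+2\ve}}$.

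You instead compare denominators multiplicatively, $\beta+\sum_{k\leq t}a_k\leq (1+A^2/\beta)(\beta+\sum_{k\leq t-1}a_k)$. This uses the same boundedness $a_t\leq A^2$, but in a single step. The cost is a slightly larger constant, $(1+A^2/\beta)^{1+2\ve}\frac{\alpha^2}{2\ve\beta^{2\ve}}$, which is irrelevant for convergence.

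Your rescaling by $\beta/2$ addresses a point the paper passes over. Lemma~\ref{lemma-Li-Orabona-2} is stated with the hypothesis $a_0>1$, yet the paper applies it with $\beta$ in the role of $a_0$ without assuming $\beta>1$. Your rescaling meets that hypothesis and recovers exactly the same bound $\frac{1}{2\ve\beta^{2\ve}}$ per unit $\alpha^2$.

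Likewise, your monotone-convergence step makes explicit the passage from finite $T$ to $T=\infty$ in the expectation bound, which the paper writes down directly.
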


\begin{proof}[Proof. (Following \cite{Li-Orabona}.)]
\[
\sum_{t=0}^\infty \eta_t^2\|H_t(x_t, \omega_t)\|_{x_t}^2 = \sum_{t=0}^\infty \eta_{t+1}^2\| H_t(x_t, \omega_t)\|_{x_t}^2 + \sum_{t=0}^\infty (\eta_t^2-\eta_{t+1}^2)\| H_t(x_t, \omega_t)\|_{x_t}^2.
\]
By Lemma \ref{lemma-Li-Orabona-2}, for any $T\geq 1$,
\[
\sum_{t=0}^T \eta_{t+1}^2\| H_t(x_t, \omega_t)\|_{x_t}^2 = \sum_{t=0}^T \frac{\alpha^2 \| H_t(x_t, \omega_t)\|_{x_t}^2}{(\beta+\sum_{i=0}^{t} \|H_i(x_i,\omega_i)\|_{x_i}^2)^{1+2\ve}} \leq \frac{\alpha^2}{2\ve \beta^{2\ve}}.
\]
So 
\[
\sum_{t=0}^\infty \eta_{t+1}^2\| H_t(x_t, \omega_t)\|_{x_t}^2 \leq \frac{\alpha^2}{2\ve \beta^{2\ve}}.
\]
Note that $\{\eta_t\}$ is a decreasing sequence of positive numbers. Since $\{x_{t}\}_{t=0}^\infty \subset K$, we have 
\[
\sum_{t=0}^\infty (\eta_t^2-\eta_{t+1}^2)\| H_t(x_t, \omega_t)\|_{x_t}^2 \leq \sum_{t=0}^\infty (\eta_t^2-\eta_{t+1}^2) A^2\leq A^2 \eta_0^2 = A^2 \frac{\alpha^2}{\beta^{1+2\ve}}.
\] 
Thus, $\sum_{t=0}^\infty \eta_{t}^2\| H_t(x_t, \omega_t)\|_{x_t}^2 \leq \frac{\alpha^2}{2\ve \beta^{2\ve}}+ A^2 \frac{\alpha^2}{\beta^{1+2\ve}}$ and is therefore convergent.

By Lemma \ref{lemma-bound-F-gradient-sum}, we have that 
\begin{eqnarray}
\label{eq-eta-nabla-F-x_t-sum-finite} \E(\sum_{t=0}^\infty \eta_t\| \nabla F(x_t)\|_{x_t}^2) & \leq &  F(x_0) - F^{\ast} + \frac{C_1}{2}\E(\sum_{t=0}^\infty \eta_t^2 \|H_t(x_t,\omega_t)\|_{x_t}^2) \\
\nonumber & \leq & F(x_0) - F^{\ast} + \frac{C_1}{2}(\frac{\alpha^2}{2\ve \beta^{2\ve}}+ A^2 \frac{\alpha^2}{\beta^{1+2\ve}})<\infty.
\end{eqnarray}
This implies that the probability of $\sum_{t=0}^\infty \eta_t\| \nabla F(x_t)\|_{x_t}^2<\infty$ is $1$. In other words, $\sum_{t=0}^\infty \eta_t\| \nabla F(x_t)\|_{x_t}^2$ converges almost surely.

Finally, for the series  $\sum_{t=0}^\infty \eta_t$, we have that, since $\{x_{t}\}_{t=0}^\infty \subset K$ and $\frac{1}{2}<\frac{1}{2}+\ve \leq 1$,
\[
\sum_{t=0}^\infty \eta_t = \sum_{t=0}^\infty\frac{\alpha}{(\beta+\sum_{i=0}^{t-1} \|H_t(x_i,\omega_i)\|_{x_i}^2)^{\frac{1}{2}+\ve}} \geq  \sum_{t=0}^\infty\frac{\alpha}{(\beta+t A^2)^{\frac{1}{2}+\ve}} =\infty.
\]
\end{proof}

The proof of Lemma \ref{lemma-F-x-t-converge-ada} below is very similar to that of Lemma \ref{lemma-F-x-t-converge}. 

\begin{lemma}\label{lemma-F-x-t-converge-ada}
Both $\sum_{t=0}^\infty \eta_t\left\langle \nabla F(x_t), H_t(x_t,\omega_t)\right\rangle_{x_t}$ and $\lim_{t\rightarrow \infty} F(x_t)$ converge almost surely.
\end{lemma}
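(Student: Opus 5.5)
We follow the proof of Lemma \ref{lemma-F-x-t-converge}, replacing $\g_t$ by the random, but predictable, learning rate $\eta_t$. Define
\[
u_t = \left\langle \nabla F(x_t), H_t(x_t,\omega_t) - \nabla F(x_t)\right\rangle_{x_t} \quad\text{and}\quad z_t = \sum_{\tau=0}^t \eta_\tau u_\tau .
\]
As before, $\|\nabla F(x_t)\|_{x_t}\leq A$, so $|u_t|\leq 2A^2$. By Lemma \ref{lemma-measurability-adaptive}, both $\eta_t$ and $x_t$ are measurable functions of $\omega_0,\dots,\omega_{t-1}$. Since $\omega_t$ is independent of these, Assumption \ref{assumption-probability-space-varying} gives
\[
\E(\eta_t u_t~\big{|}~\omega_0,\dots,\omega_{t-1}) = \eta_t\,\E(u_t~\big{|}~\omega_0,\dots,\omega_{t-1}) = 0 .
\]
Hence $\{z_t\}$ is a martingale with respect to $\{\omega_t\}$, and $\E(z_t)=0$.

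The one new point is bounding the variance, because $\sum \eta_t^2$ is random rather than a deterministic convergent series. Repeating the earlier computation, the cross term $\E(\eta_t u_t z_{t-1})$ vanishes, so
\[
Var(z_T) = \sum_{t=0}^T \E(\eta_t^2 u_t^2).
\]
I would bound $u_t^2$ so that the sum is controlled by the first item of Lemma \ref{lemma-gradient-square-sum-converge}. Using
\[
u_t^2 \leq 2\|\nabla F(x_t)\|_{x_t}^2\left(\|H_t(x_t,\omega_t)\|_{x_t}^2 + \|\nabla F(x_t)\|_{x_t}^2\right),
\]
we get $\E(\eta_t^2u_t^2)\leq 2A^2\E(\eta_t^2\|H_t(x_t,\omega_t)\|_{x_t}^2) + 2A^2\E(\eta_t^2\|\nabla F(x_t)\|_{x_t}^2)$. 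For the second term, condition on the past: $\E(\|H_t(x_t,\omega_t)\|_{x_t}^2\,|\,\omega_0,\dots,\omega_{t-1})\geq \|\nabla F(x_t)\|_{x_t}^2$ by Jensen's inequality, and $\eta_t$ is measurable with respect to the past, so $\E(\eta_t^2\|\nabla F(x_t)\|_{x_t}^2)\leq \E(\eta_t^2\|H_t(x_t,\omega_t)\|_{x_t}^2)$. Therefore
\[
Var(z_T)\leq 4A^2\,\E\Big(\sum_{t=0}^\infty \eta_t^2\|H_t(x_t,\omega_t)\|_{x_t}^2\Big)\leq 4A^2\Big(\frac{\alpha^2}{2\ve\beta^{2\ve}}+\frac{A^2\alpha^2}{\beta^{1+2\ve}}\Big),
\]
where the last step uses the deterministic bound from the proof of Lemma \ref{lemma-gradient-square-sum-converge}. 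So the variances are uniformly bounded, and the martingale convergence theorem gives almost sure convergence of $\sum_t\eta_t u_t$. By Lemma \ref{lemma-gradient-square-sum-converge}, $\sum_t \eta_t\|\nabla F(x_t)\|_{x_t}^2$ converges almost surely. Adding the two series shows that $\sum_t\eta_t\langle\nabla F(x_t),H_t(x_t,\omega_t)\rangle_{x_t}$ converges almost surely.

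For $F(x_t)$, fix a sample path on which the series above converges. Define
\[
v_t = F(x_t) - \sum_{\tau=t}^\infty \eta_\tau\left\langle \nabla F(x_\tau),H_\tau(x_\tau,\omega_\tau)\right\rangle_{x_\tau} + \frac{C_1}{2}\sum_{\tau=t}^\infty \eta_\tau^2\|H_\tau(x_\tau,\omega_\tau)\|_{x_\tau}^2 .
\]
Both tail sums are finite, the second by Lemma \ref{lemma-gradient-square-sum-converge}. Inequality \eqref{eq-lemma-F-K-A-Lipschitz-ada} says exactly that $v_{t+1}\leq v_t$. Because $F$ is bounded on $K$ and both tails tend to $0$, $\{v_t\}$ is bounded and decreasing, so it converges. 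Then $F(x_t)=v_t+o(1)$, so $F(x_t)$ converges almost surely.

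The main obstacle is the variance bound: it must use the conditioning/Jensen step together with the pathwise bound from Lemma \ref{lemma-gradient-square-sum-converge}, instead of the deterministic $\sum\g_t^2<\infty$ used in Lemma \ref{lemma-F-x-t-converge}.
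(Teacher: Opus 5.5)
Your proof is correct and follows the paper's argument: the same martingale $z_t=\sum_{\tau\le t}\eta_\tau u_\tau$, an $L^2$ bound followed by the martingale convergence theorem, and the same decreasing sequence $v_t$ built from inequality \eqref{eq-lemma-F-K-A-Lipschitz-ada}. The only difference is in the variance bound. The paper uses $\eta_t^2u_t^2\le 4A^2\eta_0\,\eta_t\|\nabla F(x_t)\|_{x_t}^2$ together with the expectation bound \eqref{eq-eta-nabla-F-x_t-sum-finite}, whereas you use conditional Jensen to reduce to the pathwise bound on $\sum_t\eta_t^2\|H_t(x_t,\omega_t)\|_{x_t}^2$, and both routes are valid.
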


\begin{proof}
Define 
\begin{equation}\label{eq-lemma-F-x-t-converge-ada-1}
u_t = \left\langle \nabla F(x_t), H_t(x_t,\omega_t) - \nabla F(x_t)\right\rangle_{x_t} \text{ and  } z_t = \sum_{\tau=0}^t \eta_\tau u_\tau.
\end{equation} 
Since $\|\nabla F(x_t)\|_{x_t} = \|\E_{\Omega_t}(H_t(x_t,\omega))\|_{x_t} \leq \E_{\Omega_t}(\|H_t(x_t,\omega)\|_{x_t}) \leq A$, we have that 
\[
|u_t| \leq \|\nabla F(x_t)\|_{x_t} (\|\nabla F(x_t)\|_{x_t} + \|H_t(x_t,\omega_t)\|_{x_t}) \leq 2A \|\nabla F(x_t)\|_{x_t} \text{ for } t\geq 0.
\]
Since $\omega_t$ is independent of $\omega_0,\dots, \omega_{t-1}$ while $x_t$ and $\eta_t$ are determined by $\omega_0,\dots, \omega_{t-1}$, we have that $\E(\eta_t u_t~\big{|}~\omega_0,\dots,\omega_{t-1}) = 0$ for $t \geq 0$ and therefore $\E(z_t~\big{|}~\omega_0,\dots,\omega_{t-1})=z_{t-1}$. Here, note that $z_{t-1}$ is also determined by $\omega_0,\dots,\omega_{t-1}$. This shows that $\{z_t\}_{t=0}^\infty$ is a Martingale with respect to the sequence $\{\omega_t\}_{t=0}^\infty$ of random inputs. Note again that $\eta_t\leq \eta_0$ for $t \geq 0$. Moreover, for $t\geq 0$, we have $\E(\eta_t u_t) = \E(\E(\eta_t u_t~\big{|}~\omega_0,\dots,\omega_{t-1}))=0$ and, therefore, $\E(z_t)=0$. So the variance of $z_t$ satisfies
\begin{eqnarray*}
&& Var(z_t)  =  \E(z_t^2) =E((z_{t-1}+\eta_t u_t)^2) = \E(z_{t-1}^2 + \eta_t^2 u_t^2 +  2\eta_t u_t z_{t-1}) \\
& = &   Var(z_{t-1}) + \E(\eta_t^2u_t^2) + 2 \E( \eta_t u_t z_{t-1}) \leq  Var(z_{t-1}) + 4A^2\eta_0\E(\eta_t \|\nabla F(x_t)\|_{x_t}^2) + 2 \E(\eta_t u_t z_{t-1}) \\
& = & Var(z_{t-1}) + 4A^2\eta_0\E(\eta_t \|\nabla F(x_t)\|_{x_t}^2)+ \E( z_{t-1}\E(\eta_t u_t~\big{|}~\omega_0,\dots,\omega_{t-1}))\\
& = & Var(z_{t-1}) + 4A^2\eta_0\E(\eta_t \|\nabla F(x_t)\|_{x_t}^2).
\end{eqnarray*}
By inequality \eqref{eq-eta-nabla-F-x_t-sum-finite}, this implies that $\{Var(z_t)\}_{t=0}^\infty$ is bounded. Thus, by the Martingale Convergence Theorem, $\lim_{t\rightarrow \infty}z_t =  \sum_{t=0}^\infty \eta_t u_t$ converges almost surely. By Lemma \ref{lemma-gradient-square-sum-converge}, $\sum_{t=0}^\infty \eta_t \|\nabla F(x_t)\|_{x_t}^2$ also converges almost surely. Thus, $\sum_{t=0}^\infty \eta_t \left\langle \nabla F(x_t), H_t(x_t,\omega_t)\right\rangle_{x_t} = \sum_{t=0}^\infty \eta_t u_t + \sum_{t=0}^\infty \eta_t \|\nabla F(x_t)\|_{x_t}^2$ converges almost surely.

Next consider $\{F(x_t)\}_{t=0}^\infty$. Assume that $\sum_{t=0}^\infty \eta_t \left\langle \nabla F(x_t), H_t(x_t,\omega_t)\right\rangle_{x_t}$ converges, which, as we have just shown, happens with probability $1$. Define 
\[
v_t = F(x_t) -\sum_{\tau=t}^\infty \eta_\tau \left\langle \nabla F(x_\tau), H_\tau (x_\tau,\omega_\tau)\right\rangle_{x_\tau} + \frac{C_1}{2} \sum_{\tau=t}^\infty \eta_\tau^2\|H_t(x_\tau,\omega_\tau) \|_{x_\tau}^2.
\]
By inequality \eqref{eq-lemma-F-K-A-Lipschitz-ada}, $\{v_t\}_{t=0}^\infty$ is a decreasing sequence. Since $\{x_t\}_{t=0}^\infty$ is contained in the compact set $K$, $\{F(x_t)\}_{t=0}^\infty$ is a bounded sequence.  Since $\sum_{t=0}^\infty \eta_t\left\langle \nabla F(x_t), H_t(x_t,\omega_t)\right\rangle_{x_t}$ converges and, by Lemma \ref{lemma-gradient-square-sum-converge}, $\sum_{t=0}^\infty \eta_t^2\| H_t(x_t, \omega_t)\|_{x_t}^2$ also converges, it follows that the sequences $\{\sum_{\tau=t}^\infty \eta_\tau \left\langle \nabla F(x_\tau), H_\tau (x_\tau,\omega_\tau)\right\rangle_{x_\tau}\}_{t=0}^\infty$ and $\{\sum_{\tau=t}^\infty \eta_\tau^2\|H_t(x_\tau,\omega_\tau) \|_{x_\tau}^2\}_{t=0}^\infty$ are also bounded. So $\{v_t\}_{t=0}^\infty$ is bounded too. Thus, $\lim_{t \rightarrow \infty} v_t$ converges. Note that 
\[
\lim_{t \rightarrow \infty} \sum_{\tau=t}^\infty \eta_\tau \left\langle \nabla F(x_\tau), H_\tau(x_\tau,\omega_\tau)\right\rangle_{x_\tau} = \lim_{t \rightarrow \infty} \sum_{\tau=t}^\infty \eta_\tau^2\|H_t(x_\tau,\omega_\tau) \|_{x_\tau}^2 =0.
\]
This shows that $\lim_{t\rightarrow \infty} F(x_t) = \lim_{t \rightarrow \infty} v_t$ converges when $\sum_{t=0}^\infty \eta_t \left\langle \nabla F(x_t), H_t(x_t,\omega_t)\right\rangle_{x_t}$ converges, which happens with probability $1$. Hence, $\lim_{t\rightarrow \infty} F(x_t)$ also converges almost surely.
\end{proof}

\begin{lemma}\label{lemma-nabla-F-x-t-converge-ada}
$\lim_{t \rightarrow \infty} \|\nabla F(x_t)\|_{x_t} =0$ almost surely.
\end{lemma}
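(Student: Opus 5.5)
The plan mirrors the proof of Lemma \ref{lemma-nabla-F-x-t-converge}, with $\g_t$ replaced by $\eta_t$. We apply Lemma \ref{lemma-Li-Orabona-1} pathwise, taking $a_t=\eta_t$ and $b_t=\|\nabla F(x_t)\|_{x_t}^2$. Both sequences are nonnegative. By Lemma \ref{lemma-gradient-square-sum-converge}, $\sum_{t=0}^\infty \eta_t\|\nabla F(x_t)\|_{x_t}^2$ converges almost surely, while $\sum_{t=0}^\infty \eta_t=\infty$ holds for every realization, because $\|H_k(x_k,\omega_k)\|_{x_k}\le A$ and $\frac12+\ve\le 1$. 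So only the third hypothesis of Lemma \ref{lemma-Li-Orabona-1} needs checking, namely the increment bound $|b_{t+1}-b_t|\le L\eta_t$ with a deterministic $L$.

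For the increment bound, first note that $\|\nabla F(x)\|_x\le A$ on $K$, since $\nabla F(x)=\E_{\Omega_t}(H_t(x,\omega))$. This gives
\[
\left|\|\nabla F(x_{t+1})\|_{x_{t+1}}^2-\|\nabla F(x_t)\|_{x_t}^2\right|\le 2A\left|\|\nabla F(x_{t+1})\|_{x_{t+1}}-\|\nabla F(x_t)\|_{x_t}\right|.
\]
Write $\mathbf{v}_t=-\eta_t H_t(x_t,\omega_t)$, so $x_{t+1}=R_{x_t}(\mathbf{v}_t)$. By \eqref{eq-eta-H-bound}, $\|\mathbf{v}_t\|_{x_t}\le \eta_t A\le A$, using $\eta_t\le\eta_0\le 1$. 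We insert the intermediate term $\|\nabla(F\circ R_{x_t})(\mathbf{v}_t)\|_{x_t}$ and use the triangle inequality.
\begin{itemize}
\item Lemma \ref{lemma-gradient-difference}, applied with $\widetilde K=K$ and $r=A$, bounds the first difference by $C_2\|\mathbf{v}_t\|_{x_t}$.
\item Inequality \eqref{eq-F-K-A-Lipschitz-1} of Lemma \ref{lemma-F-K-A-Lipschitz} bounds the second difference by $C_1\|\mathbf{v}_t\|_{x_t}$.
\end{itemize}
Combining these yields
\[
|b_{t+1}-b_t|\le 2A^2(C_1+C_2)\eta_t ,
\]
so $L=2A^2(C_1+C_2)$ works, deterministically.

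Fix an outcome in the full-measure event on which $\sum_t\eta_t\|\nabla F(x_t)\|_{x_t}^2<\infty$. On that outcome all three hypotheses of Lemma \ref{lemma-Li-Orabona-1} hold, so $\|\nabla F(x_t)\|_{x_t}^2\to0$, which gives the claim almost surely.

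No step here is hard; the only point requiring care is that the hypotheses are verified sample-path-wise. Both the divergence of $\sum_t\eta_t$ and the Lipschitz-type increment bound hold for every realization, with constants independent of $\omega$, so the single almost-sure event from Lemma \ref{lemma-gradient-square-sum-converge} suffices. We do not claim mean-square convergence, because $\eta_t$ is random and the expectation argument used for Lemma \ref{lemma-nabla-F-x-t-converge} does not transfer directly.
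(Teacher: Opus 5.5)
Your proposal is correct and follows essentially the same route as the paper. Both arguments use the $2A$ factorization, the intermediate term $\|\nabla(F\circ R_{x_t})(-\eta_t H_t(x_t,\omega_t))\|_{x_t}$ bounded via Lemmas \ref{lemma-gradient-difference} and \ref{lemma-F-K-A-Lipschitz} to get $2A^2(C_1+C_2)\eta_t$, and Lemma \ref{lemma-Li-Orabona-1} fed with the conclusions of Lemma \ref{lemma-gradient-square-sum-converge}; your explicit note that the hypotheses are checked pathwise on a single full-measure event makes precise what the paper leaves implicit.
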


\begin{proof} (Adapted from \cite{Li-Orabona}.)
Let $C_1$ and $C_2=C_{K,A}$ be the positive constants given in Lemmas \ref{lemma-F-K-A-Lipschitz} and \ref{lemma-gradient-difference}. Then 
\begin{eqnarray*}
&& \left|\|\nabla F(x_{t+1})\|_{x_{t+1}}^2 -\|\nabla F(x_{t})\|_{x_{t}}^2\right| = (\|\nabla F(x_{t+1})\|_{x_{t+1}}+\|\nabla F(x_{t})\|_{x_{t}})~\big{|}\|\nabla F(x_{t+1})\|_{x_{t+1}} -\|\nabla F(x_{t})\|_{x_{t}}\big{|} \\
& \leq & 2A\big{|}\|\nabla F(x_{t+1})\|_{x_{t+1}} -\|\nabla F(x_{t})\|_{x_{t}}\big{|}\\
& = & 2A\big{|}\|\nabla F(R_{x_t}\left(-\eta_t H_t(x_t,\omega_t)\right))\|_{R_{x_t}\left(-\eta_t H_t(x_t,\omega_t)\right)} -\|\nabla F(x_{t})\|_{x_{t}}\big{|} \\
& \leq & 2A(\big{|}\|\nabla F(R_{x_t}\left(-\eta_t H_t(x_t,\omega_t)\right))\|_{R_{x_t}\left(-\eta_t H_t(x_t,\omega_t)\right)} -\|\nabla (F\circ R_{x_t})\left(-\eta_t H_t(x_t,\omega_t)\right)\|_{x_{t}}\big{|} \\
&& +  \big{|} \|\nabla (F\circ R_{x_t})\left(-\eta_t H_t(x_t,\omega_t)\right)\|_{x_{t}}  -\|\nabla F(x_{t})\|_{x_{t}}\big{|})
\end{eqnarray*}
By Lemma \ref{lemma-gradient-difference} and inequality \eqref{eq-eta-H-bound},
\begin{eqnarray*}
&& \big{|}\|\nabla F(R_{x_t}\left(-\eta_t H_t(x_t,\omega_t)\right))\|_{R_{x_t}\left(-\eta_t H_t(x_t,\omega_t)\right)} -\|\nabla (F\circ R_{x_t})\left(-\eta_t H_t(x_t,\omega_t)\right)\|_{x_{t}}\big{|} \\
& \leq &  C_2 \eta_t  \|H_t(x_t,\omega_t)\|_{x_{t}} \leq C_2 A \eta_t.
\end{eqnarray*}
By Lemma \ref{lemma-F-K-A-Lipschitz} and inequality \eqref{eq-eta-H-bound},
\[
\big{|} \|\nabla (F\circ R_{x_t})\left(-\eta_t H_t(x_t,\omega_t)\right)\|_{x_{t}} -\|\nabla F(x_{t})\|_{x_{t}}\big{|} \leq C_1\eta_t  \|H_t(x_t,\omega_t)\|_{x_{t}}\leq C_1 A \eta_t.
\]
Combining the above, we have that
\begin{equation}\label{eq-gradient-square-difference-bound}
\left|\|\nabla F(x_{t+1})\|_{x_{t+1}}^2 -\|\nabla F(x_{t})\|_{x_{t}}^2\right| \leq 2 A^2 (C_1+C_2) \eta_t.
\end{equation}
In summary, we have that
\begin{itemize}
	\item $\sum_{t=0}^\infty \eta_t\| \nabla F(x_t)\|_{x_t}^2$ converges almost surely by Lemma \ref{lemma-gradient-square-sum-converge},
	\item $\sum_{t=0}^\infty \eta_t = \infty$ by Lemma \ref{lemma-gradient-square-sum-converge},
	\item $\left|\|\nabla F(x_{t+1})\|_{x_{t+1}}^2 -\|\nabla F(x_{t})\|_{x_{t}}^2\right| \leq 2 A^2 (C_1+C_2) \eta_t$ by inequality \eqref{eq-gradient-square-difference-bound}.
\end{itemize}
Thus, by Lemma \ref{lemma-Li-Orabona-1}, $\{\|\nabla F(x_t)\|_{x_t}\}_{t=0}^\infty$ converges almost surely to $0$.
\end{proof}

\begin{proof}[Proof of Theorem \ref{THM-Omega-varying-adaptive-learning-rate}]
Theorem \ref{THM-Omega-varying-adaptive-learning-rate} now follows from Lemmas \ref{lemma-F-x-t-converge-ada} and \ref{lemma-nabla-F-x-t-converge-ada}.
\end{proof}

\subsection{The proofs of Corollaries \ref{cor-batch-with-repetitions}, \ref{cor-batch-no-repetitions} and \ref{cor-batch-stratified}}

\begin{proof}[Proof of Corollary \ref{cor-batch-with-repetitions}]
Let $\overline{H}^{[b]}:\M \times \Omega^{b} \rightarrow T\M$ be the function given in \eqref{eq-def-H-bar-b}. For any $x \in \M$ and $b\geq 1$, we have that 
\[
\E_{\Omega^b}(H(x, \omega_1,\dots,\omega_b)) = \frac{1}{b}\sum_{i=1}^b\E_{\Omega}(H(x,\omega_i))= \nabla F(x).
\]
To prove Corollary \ref{cor-batch-with-repetitions}, one just needs to apply Theorems \ref{THM-Omega-varying-deterministic-learning-rate} and \ref{THM-Omega-varying-adaptive-learning-rate} to the special case in which
\begin{itemize}
	\item $\Omega_t = \Omega^{S_{t+1}-S_t}$,
	\item the random variable is $(\omega_{S_t},\dots,\omega_{S_{t+1}-1}) \in  \Omega^{S_{t+1}-S_t}$ for $t=0,1,\dots$,
	\item the random gradient is $H_t = \overline{H}^{[S_{t+1}-S_t]}: \M \times \Omega^{S_{t+1}-S_t} \rightarrow T\M$.
\end{itemize}
It is straightforward to verify that the assumptions in Corollary \ref{cor-batch-with-repetitions} imply that the above special case satisfies the corresponding assumptions in Theorems \ref{THM-Omega-varying-deterministic-learning-rate} and \ref{THM-Omega-varying-adaptive-learning-rate}.
\end{proof}

\begin{proof}[Proof of Corollary \ref{cor-batch-no-repetitions}]
Let $\overline{H}^{[b]}$ be the function defined in \eqref{eq-def-H-bar-b}. Since it is symmetric with respect to the inputs $(\omega_1,\dots,\omega_b) \in \Omega^b$,  $\overline{H}^{[b]}$ induces a function $\overline{H}^{[b]}: \M \times \p_b(\Omega) \rightarrow T\M$. Note that, for $x \in \M$ and $1 \leq b \leq N$, we have
\[
\E_{\p_b(\Omega)}(\overline{H}^{[b]}(x,Y))=\frac{1}{\bn{N}{b}} \sum_{Y \in \p_b(\Omega)} \frac{1}{b} \sum_{y\in Y} H(x,y) = \frac{\bn{N-1}{b-1}}{b\bn{N}{b}} \sum_{l=1}^N H(x,l) = \frac{1}{N} \sum_{l=1}^N H(x,l) = \nabla F(x).
\]
Now apply Theorems \ref{THM-Omega-varying-deterministic-learning-rate} and \ref{THM-Omega-varying-adaptive-learning-rate} to the special case in which
\begin{itemize}
	\item $\Omega_t = \p_{b_t}(\Omega)$,
	\item the random variable is $Y_t \in  \p_{b_t}(\Omega)$ for $t=0,1,\dots$,
	\item the random gradient is $H_t = \overline{H}^{[b_t]}: \M \times \p_{b_t}(\Omega) \rightarrow T\M$.
\end{itemize}
It is straightforward to verify that the assumptions in Corollary \ref{cor-batch-no-repetitions} imply that the above special case satisfies the corresponding assumptions in Theorems \ref{THM-Omega-varying-deterministic-learning-rate} and \ref{THM-Omega-varying-adaptive-learning-rate}.
\end{proof}

\begin{proof}[Proof of Corollary \ref{cor-batch-stratified}]
Consider the probability space $\widetilde{\Omega}^t$ given by \eqref{eq-def-wide-tilde-Omega} and the function $\widetilde{H}^t: \M \times \widetilde{\Omega}^t \rightarrow T\M$ given by \eqref{eq-def-wide-tilde-H}. Note that 
\[
\E_{\widetilde{\Omega}^t} (\widetilde{H}^t(x,\widetilde{\omega})) = \sum_{j=1}^{m^t} \frac{\mu(\hat{\Omega}_j^t)}{b_j^t} \left(b_j^t \E_{\hat{\Omega}_j^t} (H(x,\omega)) \right) = \sum_{j=1}^{m^t} \mu(\hat{\Omega}_j^t) \cdot \E_{\Omega} \left(H(x,\omega)~\big{|}~\omega \in\hat{\Omega}_j^t\right)= \E_{\Omega} (H(x,\omega)) = \nabla F(x).
\]
Now apply Theorems \ref{THM-Omega-varying-deterministic-learning-rate} and \ref{THM-Omega-varying-adaptive-learning-rate} to the special case in which
\begin{itemize}
	\item $\Omega_t = \widetilde{\Omega}^t$,
	\item the random variable is $\widetilde{\omega}^t \in \widetilde{\Omega}^t$ for $t=0,1,\dots$,
	\item the random gradient is $H_t = \widetilde{H}^t: \M \times \widetilde{\Omega}^t \rightarrow T\M$.
\end{itemize}
It is straightforward to verify that the assumptions in Corollary \ref{cor-batch-stratified} imply that the above special case satisfies the corresponding assumptions in Theorems \ref{THM-Omega-varying-deterministic-learning-rate} and \ref{THM-Omega-varying-adaptive-learning-rate}.
\end{proof}

\subsection{The proof of Proposition \ref{prop-confine-compact}} The proofs for the two conclusions in Proposition \ref{prop-confine-compact} are basically the proofs for \cite[Lemma A.2]{XYW-2024} and \cite[Lemma 2.9]{YXW-2024}.

\begin{proof}[Proof of Proposition \ref{prop-confine-compact}]
Let us prove conclusion 1 first. We prove by induction that 
\begin{equation}\label{eq-x-t-confined-induction}
\rho(x_t) + \frac{b^2}{2}\sum_{j=t}^\infty \g_j^2 \leq \rho_1,
\end{equation} 
which implies conclusion 1.

For $t=0$, $\rho(x_0)\leq \rho_0$ by our choice. So $\rho(x_0) + \frac{b^2}{2}\sum_{j=0}^\infty \g_j^2 = \rho_0 +\frac{b^2 \sigma}{2} < \rho_1$. Assume that inequality \eqref{eq-x-t-confined-induction} is true for some $t\geq 0$. Now we prove it for $t+1$. By Taylor's Theorem, there is an $s^\star \in [0,1]$ such that
\[
\rho(x_{t+1}) = \rho (R_{x_t}(-\frac{\g_t}{\vf}\mathbf{u}_t)) =\rho (x_t) - \frac{\g_t}{\vf}\left\langle  \nabla \rho(x_t), \mathbf{u}_t\right\rangle_x  + \frac{\g_t^2}{2\vf^2}\Hess(\rho\circ R_{x_t})|_{-s^\star \frac{\g_t}{\vf}\mathbf{u}_t} (\mathbf{u}_t,\mathbf{u}_t) \\
\]
Recall that $c= \max \{\g_t~\big{|}~t\geq 0\}$. Note that $0\leq s^\star \frac{\g_t}{\vf}\leq \Theta$ by inequality \eqref{eq-def-vf} and
\begin{eqnarray}
\label{eq-u-t-A}&&  - \left\langle \nabla \rho(x_t), \mathbf{u}_t\right\rangle_{x_t} \\
\nonumber& \leq &  \sup\left\{\max\{0, ~-\left\langle  \nabla\rho(x_t), \mathbf{v}\right\rangle_{x_t}\}~\big{|}~\mathbf{v} \in C_{x_t}\right\}, \\
\nonumber&& \\
\label{eq-u-t-B} && \Hess(\rho\circ R_{x_t})|_{-s^\star \frac{\g_t}{\vf}\mathbf{u}_t} (\mathbf{u}_t,\mathbf{u}_t) \\
\nonumber& \leq & \sup\left\{\max\{0, ~\Hess(\rho\circ R_{x_t})|_{-\theta\mathbf{v}}(\mathbf{v},\mathbf{v})\}~\big{|}~0\leq \theta \leq \Theta,~\mathbf{v}\in C_{x_t}\right\} \\
\nonumber &  \leq & b^2B^2.
\end{eqnarray}

Let us consider the following two cases.
\begin{itemize}
	\item[Case 1:] $\rho(x_t)\leq \rho_0$. Then, by inequalities \eqref{eq-def-vf}, \eqref{eq-u-t-A} and \eqref{eq-u-t-B}, we get 
	\[
	\rho(x_{t+1})  \leq   \rho (x_t) + \frac{\g_t}{\vf} \lambda \Lambda+ \frac{\g_t^2}{2\vf^2} b^2B^2 \leq \rho (x_t) + \lambda c + \frac{b^2\g_t^2}{2} \leq \rho_0 + \lambda c + \frac{b^2\g_t^2}{2}.
	\]
	Thus, 
	\[
	\rho(x_{t+1}) + \frac{b^2}{2}\sum_{j=t+1}^\infty \g_j^2 \leq \rho_0 + \lambda c + \frac{b^2}{2}\sum_{j=t}^\infty \g_j^2 \leq \rho_1.
	\]
	\item[Case 2:] $\rho_0<\rho(x_t) < \rho_1$. Then $\left\langle  \nabla \rho(x_t), \mathbf{u}_t\right\rangle_{x_t}\geq 0$ by our choice of $\rho_0$. So, by inequalities \eqref{eq-def-vf} and \eqref{eq-u-t-B},  
	\[
	\rho(x_{t+1}) \leq \rho (x_t)+ \frac{\g_t^2}{2\vf^2}\Hess(\rho\circ R_x)|_{-s^\star \frac{\g_t}{\vf}\mathbf{u}_t} (\mathbf{u}_t,\mathbf{u}_t) \leq  \rho (x_t)+ \frac{\g_t^2}{2\vf^2} b^2B^2 \leq \rho (x_t)+ \frac{b^2\g_t^2}{2}
	\]
	Thus, 
	\[
	\rho(x_{t+1}) + \frac{b^2}{2}\sum_{j=t+1}^\infty \g_j^2 \leq \rho(x_t) + \frac{b^2}{2}\sum_{j=t}^\infty \g_j^2 \leq \rho_1.
	\]
\end{itemize}
This proves that inequality \eqref{eq-x-t-confined-induction} is true for $t+1$ and completes the proof of conclusion 1.

Next we prove conclusion 2. The inequality $\rho(x_t) \leq \rho_1$ follows from a simpler induction. First, we know that $\rho(x_0) \leq \rho_1$ by our choice. Assuming $\rho(x_t) \leq \rho_1$ for some $t\geq 0$, let us prove that $\rho(x_{t+1}) \leq \rho_1$ too. Recall that $0< \eta_t \leq \kappa$. If $\rho(x_t) \leq \rho_0$, then it follows from inequality \eqref{eq-def-b-kappa-confinement-1} that $\rho(x_{t+1}) \leq \rho_1$. If $\rho_0<\rho(x_t) \leq \rho_1$, then by inequality \eqref{eq-def-b-kappa-confinement-2}, we have that, for some $s^\star \in [0,1]$,
\begin{eqnarray*}
	&& \rho(x_{t+1}) = \rho (R_{x_t}(-\eta_t\mathbf{u}_t)) = \rho (x_t) - \eta_t\left\langle  \nabla \rho(x_t), \mathbf{u}_t\right\rangle_{x_t}  + \frac{\eta_t^2}{2}\Hess(\rho\circ R_{x_t})|_{-s^\star \eta_t\mathbf{u}_t} (\mathbf{u}_t,\mathbf{u}_t) \\
	& = & \rho (x_t) + \eta_t \left(-\left\langle  \nabla \rho(x_t), \mathbf{u}_t\right\rangle_{x_t} + \frac{\eta_t}{2}\Hess(\rho\circ R_{x_t})|_{-s^\star \eta_t\mathbf{u}_t} (\mathbf{u}_t,\mathbf{u}_t)\right) \\
	& \leq & \rho (x_t) + \eta_t \left(-\left\langle  \nabla \rho(x_t), \mathbf{u}_t\right\rangle_{x_t} + \max\left\{ 0,\frac{\kappa}{2}\Hess(\rho\circ R_{x_t})|_{-s^\star \eta_t\mathbf{u}_t} (\mathbf{u}_t,\mathbf{u}_t)\right\}\right) \leq \rho (x_t) \leq \rho_1.
\end{eqnarray*}
This completes the induction and proves conclusion 2.
\end{proof}

\end{document}